\renewcommand\@seccntformat[1]{\csname the#1\endcsname.\quad}
\newtheorem{theorem}{Theorem}[section]
\newtheorem{corollary}[theorem]{Corollary}
\newtheorem{example}[theorem]{Example}
\newtheorem{lemma}[theorem]{Lemma}
\newtheorem{proposition}[theorem]{Proposition}
\newtheorem{remark}[theorem]{Remark}
\newcommand{\norm}[3]{\ensuremath{\left\Vert#1\right\Vert_{#2}^{#3}}}
\newcommand{\abs}[3]{\ensuremath{\left\vert#1\right\vert_{#2}^{#3}}}
\begin{document}

\author[M. Ciesielski]{Maciej Ciesielski}

\thanks{$^*$This research is supported by the grant 04/43/DSPB/0089 from Polish Ministry of Science and Higher Education.}

\title[$SKM$ and $KOC$ in symmetric spaces]{Strict $K$-monotonicity and $K$-order continuity in symmetric spaces}

\begin{abstract}
This paper is devoted to strict $K$- monotonicity and $K$-order continuity in symmetric spaces. Using the local approach to the geometric structure in a symmetric space $E$ we investigate a connection between strict $K$-monotonicity and global convergence in measure of a sequence of the maximal functions. Next, we solve an essential problem whether an existence of a point of $K$-order continuity in a symmetric space $E$ on $[0,\infty)$ implies that the embedding $E\hookrightarrow{L^1}[0,\infty)$ does not hold. We finish this article with a complete characterization of $K$-order continuity in a symmetric space $E$ that is written using a notion of order continuity under some assumptions on the fundamental function of $E$.  

\end{abstract}

\maketitle


{\small \underline{2000 Mathematics Subjects Classification: 46E30, 46B20, 46B42.}\hspace{1.5cm}\
\ \ \quad\ \quad  }\smallskip\ 

{\small \underline{Key Words and Phrases:}\hspace{0.15in} Symmetric space, Marcinkiewicz space, $K$-order continuity, strict $K$-monotonicity, the Kadec-Klee property for global convergence in measure.}


\section{Introduction}

In 1992, Kurc \cite{Kurc} presented a relation between the best dominated approximation and order continuity and monotonicity properties. In the paper \cite{CieKolPan}, authors answered the question about the full characterization of the local monotonicity structure and order continuity of Banach lattices and applications in the best dominated approximation. The next motivating research was published in \cite{CieKolPlu}, where there has been established among others a connection between the best dominated approximation and the Kadec-Klee property for global convergence in measure in Banach function spaces. Recently, in view of the previous investigation there appeared many results \cite{Cies-geom,CKKP,CiesKamPluc,CieKolPluSKM,HKL} devoted to exploration of the global and local monotonicity and rotundity structure of Banach spaces applicable in the best approximation problems. The main inspiration for this article appeared in paper \cite{Cies-JAT}, where there has been introduced a new type of the best dominated approximation with respect to the Hardy-Littlewood-P\'olya relation $\prec$. It is worth mentioning that author has investigated an application of strict  $K$-monotonicity and $K$-order continuity to the best dominated approximation problems in the sense of the Hardy-Littlewood-P\'olya relation. In the spirit of the previous articles, the essential purpose of this paper is to focus on the complete criteria for $K$-order continuity in symmetric spaces. 

It is necessary to recall the significant paper \cite{ChDSS}, in which there has been shown a correlation between strict $K$-monotonicity and global convergence in measure of a sequence of the decreasing rearrangements in symmetric spaces. The next intention of this paper is to find a local version of a correspondence between strict $K$-monotonicity and global convergence in measure of a sequence of the maximal functions in symmetric spaces.  

The present article is organized as follows. Preliminaries contain all needed terminologies, which will be used later. Here we also recall an earlier result, which play a crucial role in our further exploration. 
 
Section 3 is dedicated to strict $K$-monotonicity in a symmetric space $E$. We start our investigation with the auxiliary result proving an existence of at most two upper bounds of $\mathcal{M}(x,\tau,\epsilon)$ the family of the decreasing rearrangements bounded by an element $x\in{E}$ with respect to the Hardy-Littlewood-P\'olya relation $\prec$. Then, in view of the above result, we present an application of a point of lower $K$-monotonicity to global convergence in measure of a sequence from the cone of all nonnegative decreasing elements in $E$. 

In the last section 4 we answer the key question whether an existence of a point of $K$-order continuity in a symmetric space $E$ on $[0,\infty)$ guarantees that $E$ is not embedded in $L^1[0,\infty)$. Next, we discuss the full criteria for a point of $K$-order continuity in a symmetric space $E$. Namely, we show a complete correlation between a point of $K$-order continuity and a point of order continuity in a symmetric space $E$ under an additional assumption of the fundamental function $\phi$ in case when $I=[0,\infty)$. Finally, we describe a relationship between a point of order continuity and an $H_g$ point in a symmetric space $E$. 

\section{Preliminaries}

Assume that $\mathbb{R}$, $\mathbb{R}^+$ and $\mathbb{N}$ are the sets of reals, nonnegative reals and positive integers, respectively.  For a Banach space $(X,\norm{\cdot}{X}{})$ we denote by $S(X)$ (resp. 
$B(X))$ the unit sphere (resp. the closed unit ball). A nonnegative function $\phi$ defined on $\mathbb{R}^+$ is said to be \textit{quasiconcave} if $\phi(t)$ is increasing and $\phi(t)/t$ is decreasing on $\mathbb{R}^+$ and also $\phi(t)=0\Leftrightarrow{t=0}$. Let us denote as usual by $\mu$ the Lebesgue measure on $I=[0,\alpha)$, where $\alpha =1$ or $\alpha =\infty$, and by $L^{0}$ the set of all (equivalence classes of) extended real valued Lebesgue measurable functions on $I$. We employ the notation $A^{c}=I\backslash A$ for any measurable set $A$. Given a Banach lattice $(E,\Vert \cdot \Vert _{E})$ is said to be a \textit{Banach
function space} (or a \textit{K\"othe space}) if it is a sublattice of $L^{0}
$ satisfying the following conditions
\begin{itemize}
\item[(1)] If $x\in L^0$, $y\in E$ and $|x|\leq|y|$ a.e., then $x\in E$ and $%
\|x\|_E\leq\|y\|_E$.
\item[(2)] There exists a strictly positive $x\in E$.
\end{itemize}
Additionally, to simplify the notation in the whole paper it is used the symbol $E^{+}={\{x \in E:x \ge 0\}}$. 

An element $x\in E$ is called a \textit{point of order continuity} if for any
sequence $(x_{n})\subset{}E^+$ such that $x_{n}\leq \left\vert x\right\vert 
$ and $x_{n}\rightarrow 0$ a.e. we have $\left\Vert x_{n}\right\Vert
_{E}\rightarrow 0.$ A Banach function space $E$ is said to be \textit{order continuous 
}(shortly $E\in \left( OC\right) $) if any element $x\in{}E$ is a point of order continuity (see \cite{LinTza}). We say that a Banach function space $E$ has the \textit{Fatou property} if for any $\left( x_{n}\right)\subset{}E^+$, $\sup_{n\in \mathbb{N}}\Vert x_{n}\Vert
_{E}<\infty$ and $x_{n}\uparrow x\in L^{0}$, then we get $x\in E$ and $\Vert x_{n}\Vert _{E}\uparrow\Vert x\Vert
_{E}$. Unless it is said otherwise, in the whole article it is considered that $E$ has the Fatou property.

An element $x\in{E}$ is called an $H_g$ \textit{point} in $E$ if for any sequence $(x_n)\subset{E}$ such that $x_n\rightarrow{x}$ globally in measure and $\norm{x_n}{E}{}\rightarrow\norm{x}{E}{}$, then $\left\Vert x_n-x\right\Vert _{E}\rightarrow{0}$. Let us recall that the space $E$ has the \textit{Kadec-Klee property for global convergence in measure} if each element $x\in{E}$ is an $H_g$ point in $E$. 

The \textit{distribution function} for any function $x\in L^{0}$ is given by 
\begin{equation*}
d_{x}(\lambda) =\mu\left\{ s\in [ 0,\alpha) :\left\vert x\left(s\right) \right\vert >\lambda \right\},\qquad\lambda \geq 0.
\end{equation*}
For each element $x\in L^{0}$ we define its \textit{decreasing rearrangement} by 
\begin{equation*}
x^{\ast }\left( t\right) =\inf \left\{ \lambda >0:d_{x}\left( \lambda
\right) \leq t\right\}, \text{ \ \ } t\geq 0.
\end{equation*}
 We use the convention $x^{*}(\infty)=\lim_{t\rightarrow\infty}x^{*}(t)$ if $\alpha=\infty$ and $x^*(\infty)=0$ if $\alpha=1$. For any function $x\in L^{0}$ we define the \textit{maximal function} of $x^{\ast }$ by 
\begin{equation*}
x^{\ast \ast }(t)=\frac{1}{t}\int_{0}^{t}x^{\ast }(s)ds.
\end{equation*}
Given $x\in L^{0}$, it is commonly known that $x^{\ast }\leq x^{\ast \ast },$ $x^{\ast \ast }$ is decreasing, continuous and subadditive. For more details of $d_{x}$, $x^{\ast }$ and $x^{\ast \ast }$ the reader is referred to see \cite{BS, KPS}. 

Let us recall that two functions $x,y\in{L^0}$ are called \textit{equimeasurable} (shortly $x\sim y$) if $d_x=d_y$. A Banach function space $(E,\Vert \cdot \Vert_{E}) $ is said to be \textit{symmetric} or \textit{rearrangement invariant} (r.i. for short) if whenever $x\in L^{0}$ and $y\in E$ such that $x \sim y,$ then $x\in E$ and $\Vert x\Vert_{E}=\Vert y\Vert _{E}$. For a symmetric space $E$ we define $\phi_{E}$ its \textit{fundamental function} given by $\phi_{E}(t)=\Vert\chi_{(0,t)}\Vert_{E}$ for any $t\in [0,\alpha)$ (see \cite{BS}). Given $x,y\in{}L^{1}+L^{\infty }$ we define the \textit{Hardy-Littlewood-P\'olya relation} $\prec$ by 
\begin{equation*}
x\prec y\Leftrightarrow x^{\ast \ast }(t)\leq y^{\ast \ast }(t)\text{ for
all }t>0.\text{ }
\end{equation*}

A symmetric space $E$ is said to be $K$-\textit{monotone} (shortly $E\in(KM)$)
if for any $x\in L^{1}+L^{\infty}$ and $y\in E$ with $x\prec y,$ then $x\in E$ and $\Vert x\Vert_{E}\leq \Vert y\Vert _{E}.$ It is well known that a symmetric space is $K$-monotone if and only if $E$ is exact
interpolation space between $L^{1}$ and $L^{\infty }.$ Let us also recall that a symmetric space $E$ equipped with an order continuous norm or with the Fatou property is $K$-monotone (see \cite{KPS}).

Given $x\in{E}$ is said to be a \textit{point of lower $K$-monotonicity} of $E$ (shortly an $LKM$ \textit{point}) if for any $y\in{E}$, $x^*\neq{y^*}$ with $y\prec{x}$, then $\norm{y}{E}{}<\norm{x}{E}{}$. Let us mention that given a symmetric space $E$ is called \textit{strictly $K$-monotone} (shortly $E\in(SKM)$) if any element of $E$ is an $LKM$ point. 

An element $x\in{E}$ is called a \textit{point of $K$-order continuity} of $E$ if for any sequence $(x_n)\subset{E}$ with $x_n\prec{x}$ and $x_n^*\rightarrow{0}$ a.e. we have $\norm{x_n}{E}{}\rightarrow{0}$. A symmetric space $E$ is said to be $K$-\textit{order continuous} (shortly $E\in\left(KOC\right)$) if any element $x$ of $E$ is a point of $K$-order continuity. We refer the reader for more information to see \cite{ChDSS,Cies-JAT,Cies-geom,CieKolPluSKM}.

For any quasiconcave function $\phi$ on $I$ the Marcinkiewicz function space $M_{\phi}^{(*)}$ (resp. $M_{\phi}$) is a subspace of $L^0$ such that
$$\norm{x}{M_{\phi}^{(*)}}{}=\sup_{t>0}\{x^*(t)\phi(t)\}<\infty$$
$${\left(\textnormal{resp. }\norm{x}{M_{\phi}}{}=\sup_{t>0}\{x^{**}(t)\phi(t)\}<\infty\right).}$$
Clearly, $M_{\phi}\overset{1}{\hookrightarrow}M_{\phi}^{(*)}$. It is well known that the Marcinkiewicz space $M_{\phi}^{(*)}$ (resp. $M_{\phi}$) is a r.i. quasi-Banach function space (r.i. Banach function space) with the fundamental function $\phi$ on $I$. It is worth reminding that for any symmetric space $E$ with the fundamental function $\phi$ we have $E\overset{1}{\hookrightarrow}M_{\phi}$ (for more details see \cite{BS,KPS}). 

Let us recall auxiliary result proved in \cite{Cies-JAT} which will be useful in our further investigation.
\begin{lemma}\label{lem:x**=0}
	Let $x\in{L^1+L^\infty}$ and $x^*(\infty)=0$, then $x^{**}(\infty)=0$. 
\end{lemma}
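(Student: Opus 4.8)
The statement is a Cesàro-mean phenomenon: if the decreasing function $x^*$ tends to $0$ at infinity, its running average $x^{**}$ ought to do the same. The plan is to make this precise while paying attention to the only delicate point, the finiteness of $\int_0^t x^*(s)\,ds$ on bounded intervals, which is exactly where the hypothesis $x\in L^1+L^\infty$ enters.

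First I would record that $x\in L^1+L^\infty$ guarantees $\int_0^t x^*(s)\,ds<\infty$ for every finite $t>0$. Indeed, membership in $L^1+L^\infty$ is equivalent to $x^{**}(1)=\int_0^1 x^*(s)\,ds<\infty$; since $x^*$ is decreasing this forces $x^*(1)\le\int_0^1 x^*(s)\,ds<\infty$, and therefore
$$\int_0^t x^*(s)\,ds=\int_0^1 x^*(s)\,ds+\int_1^t x^*(s)\,ds\le x^{**}(1)+(t-1)x^*(1)<\infty$$
for all $t\ge 1$ (and trivially for $t\le 1$). In particular $x^{**}$ is a finite, nonnegative, decreasing function, so $x^{**}(\infty)=\lim_{t\to\infty}x^{**}(t)$ exists.

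Next comes the averaging estimate. Fix $\epsilon>0$. Since $x^*$ is decreasing with $x^*(\infty)=0$, there is $T>0$ such that $x^*(s)\le\epsilon$ for all $s\ge T$. For $t>T$ I would split the integral and bound the tail by its supremum:
$$x^{**}(t)=\frac{1}{t}\int_0^T x^*(s)\,ds+\frac{1}{t}\int_T^t x^*(s)\,ds\le\frac{1}{t}\int_0^T x^*(s)\,ds+\epsilon.$$
The first term tends to $0$ as $t\to\infty$ because its numerator $\int_0^T x^*(s)\,ds$ is a fixed finite constant by the previous paragraph. Hence $\limsup_{t\to\infty}x^{**}(t)\le\epsilon$, and letting $\epsilon\to 0$ yields $x^{**}(\infty)=0$.

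As for the main obstacle: analytically there is none of real depth, since this is a routine Cesàro argument, and the only step requiring genuine care is the finiteness of the head integral $\int_0^T x^*$. Were $x^*$ merely to vanish at infinity without any integrability control near the origin, $x^{**}$ could be identically $+\infty$ and the conclusion would fail; so the emphasis in the write-up would be on invoking the hypothesis $x\in L^1+L^\infty$ cleanly to secure that finiteness.
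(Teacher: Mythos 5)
Your proof is correct and complete: the reduction of finiteness of $\int_0^T x^*$ to $x^{**}(1)<\infty$ via $x\in L^1+L^\infty$, followed by the split $x^{**}(t)\le \frac{1}{t}\int_0^T x^* + \epsilon$ for $t>T$, is exactly the standard Ces\`aro argument one would expect, and you correctly identify the integrability near the origin as the only place the hypothesis $x\in L^1+L^\infty$ is needed. Note that the paper itself gives no proof of this lemma --- it is quoted as an auxiliary result from the earlier reference \cite{Cies-JAT} --- so there is nothing to compare against, but your argument would serve as a self-contained proof.
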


\section{local approach to strict $K$-monotonicity in symmetric spaces}

Now, we introduce the special set which is similar to a notion that was presented in \cite{KPS}. Let $E$ be a symmetric space and $x\in{E}$, $x=x^*$. Define for any $\epsilon>0$ and $\tau>0$,
\begin{equation*}
\mathcal{M}(x,\tau,\epsilon)=\left\{y\in{E}:y=y^*,y\prec{x},\int_{0}^{\tau}y(s)ds+\epsilon\leq\int_{0}^{\tau}x(s)ds\right\}.
\end{equation*}
The next result shows that for any collection $\mathcal{M}(x,\tau,\epsilon)$ there exist at most two elements that are upper bounds of a given family less than $x$ with respect to the relation $\prec$. The proof of the below result is quite long and required completely different techniques than the proof stated in Corollary 2.2 \cite{ChDSS}. Although the case $1$ in the proof follows from the case $2$, we present the whole details for the sake of the reader's convenience and because of the construction that might arouse the reader's interest.

\begin{proposition}\label{prop:2:upper:bound}
Let $x\in{L^0}$, $x=x^*$, $x^*(\infty)=0$ and $\tau>0$, $\epsilon>0$. Then, there exist $\epsilon_1>0$, $\tau_1\in(0,\tau)$, $z\in\mathcal{M}(x,\tau-\tau_1,\epsilon_1)$ and $w\in\mathcal{M}(x,\tau+\tau_1,\epsilon_1)$ such that for any $y\in\mathcal{M}(x,\tau,\epsilon)$ we have $y\prec{z}$ or $y\prec{w}$.
\end{proposition}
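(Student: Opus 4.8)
The plan is to pass from functions to their primitives and argue entirely with the pointwise order of concave functions. Since $x=x^{*}$ and every competitor satisfies $y=y^{*}$, the relation $y\prec x$ is equivalent to $\int_{0}^{t}y\le\int_{0}^{t}x$ for all $t>0$. Writing $F(t)=\int_{0}^{t}x(s)\,ds$ and, for $y$ in the family, $G_{y}(t)=\int_{0}^{t}y(s)\,ds$, the function $F$ is increasing, concave and $F(0)=0$, while $\mathcal{M}(x,\tau,\epsilon)$ becomes exactly the set of increasing concave $G$ with $G(0)=0$, $G\le F$ and $G(\tau)\le c$, where $c:=F(\tau)-\epsilon$; here $y\prec z$ means simply $G_{y}\le G_{z}$. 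I may assume $c>0$, the case $c\le 0$ being trivial since then the family is empty or reduces to $\{0\}$.

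First I would reduce the whole family to a one-parameter family of extremal elements obtained by capping $F$ with a line. Given $y$ in the family, let $m:=G_{y}'(\tau^{-})$ be the largest supergradient of the concave function $G_{y}$ at $\tau$. Concavity together with $G_{y}(0)=0$ and $G_{y}(\tau)\le c$ forces $0\le m\le G_{y}(\tau)/\tau\le c/\tau$, and the supergradient inequality gives $G_{y}(t)\le G_{y}(\tau)+m(t-\tau)\le c+m(t-\tau)=:\ell_{m}(t)$, hence $G_{y}\le\min(F,\ell_{m})=:G_{m}$. Thus it suffices to dominate the family $\{G_{m}:m\in[0,c/\tau]\}$. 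Each $G_{m}$ coincides with $F$ off an interval $(a(m),b(m))$ straddling $\tau$ and with the line $\ell_{m}$ on it, where $a(m)$ decreases from $a(0)=F^{-1}(c)$ to $a(c/\tau)=0$ and $b(m)$ is decreasing. Crucially, since $x^{*}(\infty)=0$, Lemma~\ref{lem:x**=0} yields $x^{**}(\infty)=0$, so $F(t)/t\to 0$ and every line $\ell_{m}$ with $m>0$ meets $F$ again at a finite point; this finiteness of $b(c/\tau)$ is used below.

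Next I would fix a threshold slope $m^{*}\in(0,c/\tau)$ and split the family at $m^{*}$. For the upper part put $S_{L}:=\sup_{m\in[m^{*},c/\tau]}G_{m}$; since $\min(F,\cdot)$ commutes with suprema, $S_{L}=\min(F,L)$, where $L$ equals $\ell_{m^{*}}$ on $[0,\tau]$ and $\ell_{c/\tau}$ on $[\tau,\infty)$. Because $m^{*}<c/\tau$, the function $L$ is convex with a single upward corner at $\tau$, so $S_{L}$ is convex on the interval $(a(m^{*}),b(c/\tau))$ on which it departs from and returns to $F$; hence its least concave majorant there is the chord $C_{z}$ of $F$ joining $(a(m^{*}),F(a(m^{*})))$ and $(b(c/\tau),F(b(c/\tau)))$. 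I then define $z$ by $G_{z}:=\min(F,C_{z})$: it is concave, satisfies $G_{z}\le F$ (so $z\prec x$) and $G_{z}\ge S_{L}\ge G_{m}$ for all $m\ge m^{*}$. Symmetrically, with $S_{R}:=\sup_{m\in[0,m^{*}]}G_{m}=\min(F,R)$, where $R\equiv c$ on $[0,\tau]$ and $R=\ell_{m^{*}}$ on $[\tau,\infty)$, I take $G_{w}:=\min(F,C_{w})$, where $C_{w}$ is the chord of $F$ joining $(F^{-1}(c),c)$ and $(b(m^{*}),F(b(m^{*})))$, so that $w\prec x$ and $G_{w}\ge G_{m}$ for all $m\le m^{*}$. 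Each of $G_{z},G_{w}$ is strictly below $F$ on an open interval straddling $\tau$; choosing $\tau_{1}\in(0,\tau)$ so small that $\tau-\tau_{1}$ lies in the dip of $z$ and $\tau+\tau_{1}$ in the dip of $w$, and putting $\epsilon_{1}:=\min\{F(\tau-\tau_{1})-G_{z}(\tau-\tau_{1}),\,F(\tau+\tau_{1})-G_{w}(\tau+\tau_{1})\}>0$, gives $z\in\mathcal{M}(x,\tau-\tau_{1},\epsilon_{1})$ and $w\in\mathcal{M}(x,\tau+\tau_{1},\epsilon_{1})$. Finally, any $y\in\mathcal{M}(x,\tau,\epsilon)$ has slope $m\in[0,c/\tau]$: if $m\ge m^{*}$ then $G_{y}\le G_{m}\le G_{z}$, i.e. $y\prec z$, and if $m\le m^{*}$ then $y\prec w$, which is the assertion.

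The main obstacle is the matching step: one must guarantee that the two dips overlap, so that a single admissible $z$ and a single admissible $w$, sharing one pair $(\tau_{1},\epsilon_{1})$, between them absorb the whole range $[0,c/\tau]$, and that the chords stay strictly below $F$ near $\tau$ (so that $\epsilon_{1}>0$). This forces attention to the local behaviour of $x$ at $\tau$. When $x$ is strictly decreasing around $\tau$, the graph of $F$ is strictly concave there, the touching points $a(m),b(m)$ vary strictly and the chords produce genuine positive deficits, so the construction runs directly; when $x$ is constant on an interval meeting $\tau$, the function $F$ is affine there, some lines $\ell_{m}$ merge into $F$ and the touching points degenerate, so one must first slightly displace $m^{*}$ and the points $\tau\pm\tau_{1}$ to restore strict positivity of the deficits. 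This dichotomy is exactly the division into the two cases announced before the statement, the degenerate situation being the one deducible from the generic one. I expect the only genuinely delicate points to be the finiteness of $b(c/\tau)$, supplied by Lemma~\ref{lem:x**=0}, and the positivity of $\epsilon_{1}$ in the affine case; the domination inequalities themselves collapse, after the reformulation, into comparisons of two line segments at the endpoints of an interval.
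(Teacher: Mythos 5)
Your reformulation in terms of the primitives $F=\phi_x$, $G_y=\phi_y$ and the reduction of the whole family to the one-parameter pencil $G_m=\min(F,\ell_m)$, $m\in[0,c/\tau]$, is essentially the paper's own strategy (the paper works with the same primitives, the same tangent-slope parameter $\eta$ at $\tau$, and the same ``flatten $F$ on an interval by its chord'' construction for $z$ and $w$), and your treatment of the generic case, where $F$ is not affine on the relevant intervals, is correct and if anything cleaner than the paper's. The point where your argument has a genuine gap is exactly the degenerate case that you defer with the phrase ``one must first slightly displace $m^{*}$ and the points $\tau\pm\tau_1$.'' This is not a perturbation issue, and a small displacement of $m^{*}$ does not in general restore positivity of both deficits: if $F$ is affine on a maximal interval $[\gamma_0,\beta_0]$ containing a neighbourhood of $\tau$, then the chord $C_z$ over $[a(m^{*}),b(c/\tau)]$ is strictly below $F$ near $\tau$ only if $a(m^{*})<\gamma_0$, which (since $a(\cdot)$ is decreasing with $a(m)\downarrow 0$ as $m\uparrow c/\tau$) forces $m^{*}$ to be \emph{large}, while the chord $C_w$ over $[F^{-1}(c),b(m^{*})]$ is strictly below $F$ near $\tau$ only if $b(m^{*})>\beta_0$, which (since $b(\cdot)$ is decreasing with $b(m)\uparrow\infty$ as $m\downarrow 0$) forces $m^{*}$ to be \emph{small}. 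A single threshold must satisfy both constraints, and your sketch gives no reason why the two admissible ranges of $m^{*}$ overlap; note also that if they did not, no choice of $z,w$ could work at all, since any concave $G_z$ squeezed between $\sup_{m\ge m^{*}}G_m$ and $F$ is forced to equal $F$ identically when $F$ is affine on $[a(m^{*}),b(c/\tau)]$, and then $z=x\notin\mathcal{M}(x,\tau-\tau_1,\epsilon_1)$ for every $\epsilon_1>0$.

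The resolution — which is precisely the content of the paper's Case 2 — is to take $m^{*}$ equal to the slope $s$ of the affine piece of $F$: since $\ell_s$ runs parallel to $F$ on $[\gamma_0,\beta_0]$ at vertical distance $\epsilon$, it stays strictly below $F$ slightly beyond both endpoints, so $a(s)<\gamma_0$ and $b(s)>\beta_0$ simultaneously. Making this work requires three supporting facts that your proposal does not supply: $\gamma_0>0$ (the affine piece cannot start at the origin — the paper proves this by comparing the slopes $\phi_x(\gamma)/\gamma$, $\phi_x(\beta)/\beta$ and $(\phi_x(\tau)-\epsilon)/\tau$), $\beta_0<\infty$ when $s>0$ (from $x^*(\infty)=0$), and $s<c/\tau$, which holds only after replacing $\epsilon$ by a smaller value (legitimate, since $\mathcal{M}(x,\tau,\epsilon)$ shrinks as $\epsilon$ grows, but it must be said). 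Your closing remark that the degenerate situation is ``deducible from the generic one'' has the logic reversed — the degenerate case is the one needing the extra argument, and the paper explicitly states that its non-affine Case 1 follows from its affine Case 2 — which suggests the missing step was not merely omitted for brevity. With the choice $m^{*}=s$ and the three facts above inserted, your argument closes and coincides in substance with the paper's proof.
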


\begin{proof}
 Let $y\in\mathcal{M}(x,\tau,\epsilon)$. Define for any decreasing function $u\in{L^0}$  and $t>0$,
\begin{equation*}
\phi_u(t)=\int_{0}^{t}u(s)ds.
\end{equation*}
Since $x^*(\infty)=0$, by monotonicity of functions $x$ and $y$ it is clear that $\phi_x$ and $\phi_y$ are concave, increasing and not affine on $I$. By assumption we have $\phi_y\leq\phi_x$ on $I$ and also $\phi_y(\tau)\leq\phi_x(\tau)-\epsilon$. Let $\gamma$ be an intersection of $\phi_x$ and $p(t)=\phi_x(\tau)-\epsilon$ and let $\beta$ be an intersection of $\phi_x$ and $q(t)=t(\phi_x(\tau)-\epsilon)/\tau$. Denote
\begin{equation*}
\xi=\frac{\phi_x(\beta)-\phi_x(\gamma)}{\beta-\gamma}=\frac{1}{\beta-\gamma}\int_{\gamma}^{\beta}x^*\quad\textnormal{and}\quad{\phi(t)=\xi(t-\gamma)+\phi_x(\gamma)}.
\end{equation*}
Now, we show the proof in cases.\\
\textit{Case $1.$} Assume that there exists $t\in(\gamma,\beta)$ such that $\phi(t)<\phi_x(t)$. Then, by concavity of $\phi_x$ for any $t\in(\gamma,\beta)$ we have $\phi(t)<\phi_x(t)$. Define 
\begin{equation*}
z=x^*\chi_{[0,\gamma)\cup[\beta,\infty)}+\xi\chi_{[\gamma,\beta)}.
\end{equation*}
Since $\xi$ is an average value of $x^*$ on $(\gamma,\beta)$, by monotonicity of $x^*$ we observe $z=z^*$, $z\prec{x}$ and $z\neq{x}$. Moreover, by concavity of $\phi_y$ we notice that a tangent line of $\phi_y$ at $\tau$ has a slope $\eta\in[0,(\phi(\tau)-\epsilon)/\tau]$ and for any $t\in{I}$,
\begin{equation*}
\phi_y(t)\leq{\eta(t-\tau)+\phi_x(\tau)-\epsilon}.
\end{equation*}
Hence, for any $t\in[\gamma,\beta]$ we have
\begin{equation*}
\phi_y(t)\leq{\eta(t-\tau)+\phi_x(\tau)-\epsilon}\leq{\xi(t-\tau)+\phi_x(\gamma)}=\phi(t)=\phi_z(t).
\end{equation*}
Consequently, since $\phi_x=\phi_z$ on $[0,\gamma]\cup{[\beta,\infty)}$, by assumption $y\prec{x}$ we get $y\prec{z}$.\\
\textit{Case $2$.} Assume that for any $t\in[\gamma,\beta]$, $\phi_x(t)=\phi(t)$. Denote
\begin{equation*}
\gamma_0=\inf\{t>0:\phi_x(t)=\phi(t)\}.
\end{equation*}
Clearly, $\gamma_0\leq{\gamma}$. We claim that $\gamma_0>0$. Indeed, if we suppose that $\gamma_0=0$, then $\phi_x=\phi$ on $[0,\beta]$ and so $\phi_x(0)=-\xi\gamma+\phi_x(\gamma)=0$. Hence, by definition of $\xi$ and by assumption that $\beta$ is the intersection of $\phi_x$ and $q$ we have $$\xi=\frac{\phi_x(\gamma)}{\gamma}=\frac{\phi_x(\beta)}{\beta}=\frac{\phi_x(\tau)-\epsilon}{\tau}.$$ 
On the other hand, since $\phi_x$ is affine on $[\gamma_0,\beta]$ it follows that $\xi=\phi_x(\tau)/\tau$. Therefore, we obtain a contradiction which proves our claim. Now, without loss of generality we may suppose that $\epsilon>0$ is small enough such that there exist $\gamma_1>0$ and $\beta_1>0$ the intersections of the functions $\phi_x$ and $\phi-\epsilon$. Observe $0<\gamma_1<\gamma_0<\gamma<\beta<\beta_1$. Consider temporary the slope $\eta$ of the tangent line of $\phi_y$ at $\tau$ is $\eta\in[\xi,(\phi_x(\tau)-\epsilon)/\tau]$. Define
\begin{equation*}
z=x^*\chi_{[0,\gamma_1)\cup[\tau,\infty)}+\frac{\phi_x(\tau)-\phi_x(\gamma_1)}{\tau-\gamma_1}\chi_{[\gamma_1,\tau)}.
\end{equation*}
Then, since $\frac{\phi_x(\tau)-\phi_x(\gamma_1)}{\tau-\gamma_1}$ is the average value of $x^*$ on $(\gamma_1,\tau)$, by definition of $\gamma_0$ and by assumption $\phi_x$ is affine on the interval $(\gamma_0,\beta)$ and by the concavity of $\phi_x$ it follows that $z=z^*$, $z\prec{x}$ and $z\neq{x}$. Furthermore, we have for every $t\in[\gamma_1,\tau]$, 
\begin{equation*}
\phi_y(t)\leq{\eta(t-\tau)+\phi_x(\tau)-\epsilon}\leq\phi_z(t).
\end{equation*}
Thus, since $\phi_x=\phi_z$ on $[0,\gamma_1]\cup[\tau,\infty)$, by assumption $y\prec{x}$ we conclude $y\prec{z}$, whenever $\eta\in[\xi,(\phi_x(\tau)-\epsilon)/\tau]$. Now, assume that $\eta\in[0,\xi)$. Define
\begin{equation*}
w=x^*\chi_{[0,\gamma)\cup[\beta_1,\infty)}+\frac{\phi_x(\beta_1)-\phi_x(\gamma)}{\beta_1-\gamma}\chi_{[\gamma,\beta_1)}.
\end{equation*}
Since $\frac{\phi_x(\beta_1)-\phi_x(\gamma)}{\beta_1-\gamma}$ is the average value of $x^*$ on $[\gamma,\beta_1]$, by definition of $\beta_1$ and by concavity of $\phi_x$ we get $w=w^*$, $w\prec{x}$ and $w\neq{x}$. Notice that for any $t\in[\gamma,\beta_1]$,
\begin{equation*}
\phi_y(t)\leq{\eta(t-\tau)+\phi_x(\tau)-\epsilon}\leq\phi_w(t).
\end{equation*}
Moreover, $\phi_x=\phi_w$ on $[0,\gamma]\cup[\beta_1,\infty)$ and by assumption $y\prec{x}$ we obtain $y\prec{w}$ for any $\eta\in[0,\xi)$ which completes case $2$. \\
Finally, combining both cases, by construction of $z$ and $w$ it is easy to see that there exist $\tau_1\in(0,\tau)$ and $\epsilon_1>0$ such that $z\in\mathcal{M}(x,\tau-\tau_1,\epsilon_1)$ and $w\in\mathcal{M}(x,\tau+\tau_1,\epsilon_1)$.
\end{proof}

\begin{theorem}\label{thm:conv:in:meas}
Let $E$ be a symmetric space. Assume that $x\in{E}$ is an $LKM$ point, $x^*(\infty)=0$ and $(x_n)\subset{E}$. If $x_n\prec{x}$ and $\norm{x_n}{E}{}\rightarrow\norm{x}{E}{}$, then $x_n^{**}\rightarrow{x}^{**}$ and $x_n^*\rightarrow{x^*}$ globally in measure.
\end{theorem}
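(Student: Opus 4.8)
The plan is to prove the convergence of the maximal functions by contradiction, using the hypothesis that $x$ is an $LKM$ point together with Proposition \ref{prop:2:upper:bound}, and then to upgrade the resulting pointwise statements to global convergence in measure by soft arguments. Since every hypothesis and every conclusion depends on $x$ only through $x^*$, I may assume $x=x^*$. Note that $x\in{L^1+L^\infty}$ and, by Lemma \ref{lem:x**=0}, $x^{**}(\infty)=0$; moreover the Fatou property guarantees $E\in(KM)$, so from $x_n\prec{x}$ we obtain $\norm{x_n}{E}{}\leq\norm{x}{E}{}$ and, writing $\phi_u(t)=\int_0^t u(s)\,ds$, also $\phi_{x_n}\leq\phi_x$ pointwise on $I$.

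The central step is to show $x_n^{**}\to{x^{**}}$ pointwise, equivalently $\phi_{x_n}\to\phi_x$ pointwise since $\phi_{x_n}(t)=t\,x_n^{**}(t)$. Suppose this fails at some $\tau>0$. As $\phi_{x_n}(\tau)\leq\phi_x(\tau)$, non-convergence yields $\epsilon>0$ and a subsequence (not relabelled) with $\phi_{x_n}(\tau)+\epsilon\leq\phi_x(\tau)$, that is $x_n^*\in\mathcal{M}(x,\tau,\epsilon)$. Proposition \ref{prop:2:upper:bound} then supplies $\epsilon_1>0$, $\tau_1\in(0,\tau)$ and fixed elements $z\in\mathcal{M}(x,\tau-\tau_1,\epsilon_1)$, $w\in\mathcal{M}(x,\tau+\tau_1,\epsilon_1)$ with $x_n^*\prec{z}$ or $x_n^*\prec{w}$ for every $n$. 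Because $z\prec{x}$ with $z^*\neq{x^*}$ and $w\prec{x}$ with $w^*\neq{x^*}$ (the defining inequalities of $\mathcal{M}$ force $z,w$ to have strictly smaller integrals than $x$ over the relevant intervals), the $LKM$ property gives $\norm{z}{E}{}<\norm{x}{E}{}$ and $\norm{w}{E}{}<\norm{x}{E}{}$. Hence, by $K$-monotonicity, $\norm{x_n}{E}{}\leq\max\{\norm{z}{E}{},\norm{w}{E}{}\}<\norm{x}{E}{}$ for all $n$ in the subsequence, contradicting $\norm{x_n}{E}{}\to\norm{x}{E}{}$. This uniform gap, made possible by the \emph{finiteness} of the set of upper bounds in the Proposition, is the crux of the whole argument; the remaining steps are routine and I expect no real difficulty there.

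From $x_n^{**}\to{x^{**}}$ pointwise I deduce global convergence in measure of the maximal functions. Since $0\leq{x^{**}-x_n^{**}}$ and $x^{**}(\infty)=0$, for fixed $\epsilon>0$ one chooses $T$ with $x^{**}(T)<\epsilon$, so that $\{x^{**}-x_n^{**}>\epsilon\}\subseteq[0,T)$. Splitting $[0,T)$ at an arbitrary $\delta>0$, the part in $[0,\delta)$ has measure at most $\delta$, while on the compact interval $[\delta,T]$ the monotone functions $x_n^{**}$ converge pointwise to the continuous function $x^{**}$, hence uniformly by P\'olya's theorem, so the bad set there is eventually empty. Letting $\delta\to0$ yields $\mu\{x^{**}-x_n^{**}>\epsilon\}\to0$.

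Finally, for the decreasing rearrangements I first obtain $x_n^*\to{x^*}$ a.e. For a continuity point $t_0$ of $x^*$ and $h>0$, monotonicity of $x_n^*$ gives $\frac{\phi_{x_n}(t_0+h)-\phi_{x_n}(t_0)}{h}\leq{x_n^*(t_0)}\leq\frac{\phi_{x_n}(t_0)-\phi_{x_n}(t_0-h)}{h}$; passing to the limit in $n$ (using the central step) and then letting $h\to0$, continuity of $x^*$ at $t_0$ squeezes $x_n^*(t_0)\to{x^*(t_0)}$, and such points are of full measure. The upgrade to global convergence in measure is as before: given $\epsilon>0$, pick $T$ with $x^{**}(T)<\epsilon$; since $x_n^*\leq{x^{**}}$ and $x^*\leq{x^{**}}$, for $t\geq{T}$ one has $|x_n^*(t)-x^*(t)|<\epsilon$, so the bad set lies in $[0,T)$, a set of finite measure on which a.e. convergence implies convergence in measure.
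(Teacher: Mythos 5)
Your proof is correct, and its core is the same as the paper's: the decisive step in both arguments is to assume $\int_0^\tau x_n^*\not\to\int_0^\tau x^*$, extract a subsequence lying in $\mathcal{M}(x,\tau,\epsilon)$, invoke Proposition \ref{prop:2:upper:bound} to dominate that whole subsequence by one of two fixed elements $z,w\prec x$ with $z^*\neq x^*$, $w^*\neq x^*$, and then use the $LKM$ property together with $K$-monotonicity to get the uniform norm gap $\norm{x_n}{E}{}\leq\max\{\norm{z}{E}{},\norm{w}{E}{}\}<\norm{x}{E}{}$, contradicting $\norm{x_n}{E}{}\to\norm{x}{E}{}$. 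Where you genuinely diverge is in how you pass from convergence of the integrals $\phi_{x_n}\to\phi_x$ to a.e. convergence of $x_n^*$: the paper first applies Helly's Selection Theorem (after bounding $x_n^*(t)\phi(t)\leq\norm{x}{E}{}$ via the embedding into the Marcinkiewicz space) to produce a subsequential a.e. limit $z^*$, and only then identifies $z^*=x^*$ by comparing integrals; you instead squeeze $x_n^*(t_0)$ between difference quotients of $\phi_{x_n}$ at continuity points of $x^*$, which yields a.e. convergence of the full sequence directly, with no compactness argument and no subsequence extraction. Your route is slightly more elementary and self-contained; the paper's buys the a.e. limit up front at the cost of Helly and a (standard but unstated) subsequence-of-subsequence argument to return to the whole sequence. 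Your treatment of the global convergence in measure of $x_n^{**}$ (via $x^{**}(\infty)=0$, the one-sided bound $0\leq x^{**}-x_n^{**}$, and uniform convergence of monotone functions on compacta) is also more detailed than the paper's rather terse final sentence, and the finite-measure reduction for $x_n^*$ matches the paper's. No gaps.
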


\begin{proof}
Let $\phi$ be the fundamental function of $E$. Since $x_n\prec{x}$ for any $n\in\mathbb{N}$, by symmetry and the Fatou property of $E$ and by Theorem 4.6 in \cite{BS} it follows that 
\begin{equation*}
x_n^*(t)\phi(t)\leq\norm{x_n}{E}{}\leq\norm{x}{E}{}
\end{equation*}
for any $t\in{I}$ and $n\in\mathbb{N}$. Hence, by Helly's Selection Theorem in \cite{STV} passing to subsequence and relabelling if necessary there is $z\in{E}$ such that $z=z^*$ and $x_n^*$ converges to $z^*$ a.e. on $I$. It is easy to notice that for all $0<t<s<\alpha$,
\begin{equation}\label{equ:1:cim}
\int_{t}^{s}x_n^*\rightarrow\int_{t}^{s}z^*.
\end{equation}
We claim that for any $t>0$,
\begin{equation}\label{equ:2:cim}
\int_0^{t}x_n^*\rightarrow\int_0^{t}x^*.
\end{equation}
Indeed, if it is not true then there exist $\tau>0$ and $\epsilon>0$ and a sequence $(n_k)\subset\mathbb{N}$ such that for all $k\in\mathbb{N}$,
\begin{equation*} 
\int_0^{\tau}x_{n_k}^*\leq\int_0^{\tau}x^*-\epsilon.
\end{equation*}
Consequently, by Proposition \ref{prop:2:upper:bound} there are $\epsilon_1>0$, $\tau_1>0$ and $y\in\mathcal{M}(x,\tau-\tau_1,\epsilon_1)$ and $w\in\mathcal{M}(x,\tau+\tau_1,\epsilon_1)$ such that for any $k\in\mathbb{N}$,
\begin{equation*}
x_{n_k}\prec{y}\quad\textnormal{or}\quad{x_{n_k}\prec{w}}.
\end{equation*}
Since $y^*\neq{x}^*$, $w^*\neq{x^*}$ and $y\prec{x}$, $w\prec{x}$, by assumption that $x$ is an $LKM$ point we get for every $k\in\mathbb{N}$, $$\norm{x_{n_k}}{E}{}\leq\max\{\norm{y}{E}{},\norm{w}{E}{}\}<\norm{x}{E}{}.$$
Thus, by assumption $\norm{x_{n_k}}{E}{}\rightarrow\norm{x}{E}{}$ we obtain a contradiction which provides our claim \eqref{equ:2:cim}. Now, combining conditions \eqref{equ:1:cim} and \eqref{equ:2:cim} we have $x^*=z^*$ a.e. on $I$. Therefore, $x_n^*$ converges to $x^*$ a.e. on $I$. Since $x^*(\infty)=0$, it is easy to prove that $x_n^*$ converges to $x^*$ globally in measure. Indeed, there exists $t_0>0$ such that $x^*(t)<\epsilon/2$ for any $t\geq{t_0}$. Without loss of generality we may assume that $$|x_n^*(t_0)-x^*(t_0)|<\epsilon/2$$
for large enough $n\in\mathbb{N}$. In consequence, by monotonicity of a decreasing rearrangement for all $t\geq{t_0}$ and large enough $n\in\mathbb{N}$ we obtain
\begin{equation*}
x_n^*(t)\leq{x_n^*(t_0)}\leq\abs{x_n^*(t_0)-x^*(t_0)}{}{}+x^*(t_0)<\epsilon.
\end{equation*}
Hence, for any $t\geq{t_0}$ and for large enough $n\in\mathbb{N}$ we get 
\begin{equation*}
\abs{x_n^*(t)-x^*(t)}{}{}<\epsilon
\end{equation*}
and since $x_n^*$ converges to $x^*$ locally in measure we conclude global convergence in measure. Moreover, by Lemma \ref{lem:x**=0} we have $x^{**}(\infty)=0$. Hence, there is $t_0>0$ such that for all $t\geq{t_0}$ we have $x^{**}(t)<{\epsilon}$. Since $x_n\prec{x}$ for any $n\in\mathbb{N}$ and by condition \eqref{equ:2:cim} it follows that $x_n^{**}$ converges to $x^{**}$ globally in measure.
\end{proof}

\section{$K$-order continuity in symmetric spaces}

In this section we study a full criteria for $K$-order continuity in symmetric spaces. We start our investigation with an equivalent condition for $K$-order continuity in a symmetric space $E$. This notion was discussed in paper \cite{DSS} for a separable symmetric space with the Fatou property. We study $K$-order continuity in more general case and present a simple example of the order continuous rearrangement invariant space with the Fatou property that is not $K$-order continuous, showing that Proposition 4.2 in \cite{DSS} fails in general case.

\begin{lemma}
Let $E$ be a symmetric space and $x_n,x\in{E}$ for all $n\in\mathbb{N}$ and let: 
\begin{itemize}
	\item[$(i)$] If $x_n\prec{x}$ and $x_n^*\rightarrow{0}$ a.e., then $\norm{x_n}{E}{}\rightarrow{0}.$
	\item[$(ii)$] If $x_n\prec{x}$ and $x_n\rightarrow{0}$ globally in measure, then $\norm{x_n}{E}{}\rightarrow{0}.$
\end{itemize}
Then, $(i)\Rightarrow(ii)$. If $x^*(\infty)=0$ then $(ii)\Rightarrow(i)$. 
\end{lemma}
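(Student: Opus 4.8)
The plan is to treat the two implications separately, in each case passing through the distribution function and the decreasing rearrangements, and invoking the respective hypothesis only at the very end.

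For the implication $(i)\Rightarrow(ii)$, I would start from a sequence $(x_n)\subset E$ with $x_n\prec x$ and $x_n\to0$ globally in measure, and show that this already forces $x_n^*\to0$ a.e., after which $(i)$ closes the argument. Global convergence in measure to $0$ is precisely the statement that $d_{x_n}(\lambda)=\mu\{|x_n|>\lambda\}\to0$ for every $\lambda>0$. Fixing $t>0$ and $\lambda>0$, once $n$ is large enough we have $d_{x_n}(\lambda)\le t$, and hence $x_n^*(t)=\inf\{\sigma>0:d_{x_n}(\sigma)\le t\}\le\lambda$; letting $\lambda\downarrow0$ gives $x_n^*(t)\to0$ at every $t>0$, in particular a.e. Since $x_n\prec x$ by assumption, condition $(i)$ then yields $\norm{x_n}{E}{}\to0$. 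This direction needs no hypothesis on $x^*(\infty)$.

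For the converse $(ii)\Rightarrow(i)$ under $x^*(\infty)=0$, I would begin with $(x_n)\subset E$ satisfying $x_n\prec x$ and $x_n^*\to0$ a.e., and aim to upgrade this to $x_n\to0$ globally in measure so that $(ii)$ applies. The key preliminary observation is that, since $x_n$ and $x_n^*$ are equimeasurable, $x_n\to0$ globally in measure is equivalent to $x_n^*\to0$ globally in measure, so it suffices to establish the latter. Here I would split $I$ into a tail and a finite piece. For the tail I would exploit the domination: since $x^*(\infty)=0$, Lemma \ref{lem:x**=0} gives $x^{**}(\infty)=0$, so for a given $\epsilon>0$ there is $t_0>0$ with $x^{**}(t)<\epsilon$ for all $t\ge t_0$; then $x_n\prec x$ together with $x_n^*\le x_n^{**}$ yields the uniform bound $x_n^*(t)\le x_n^{**}(t)\le x^{**}(t)<\epsilon$ for every $n$ and every $t\ge t_0$. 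On the finite interval $[0,t_0]$, a.e. convergence of $x_n^*$ to $0$ gives convergence in measure there, so $\mu\{t\in[0,t_0]:x_n^*(t)>\epsilon\}\to0$. Combining the two contributions, $\mu\{t:x_n^*(t)>\epsilon\}\to0$, i.e. $x_n^*\to0$ globally in measure, hence $x_n\to0$ globally in measure, and $(ii)$ then delivers $\norm{x_n}{E}{}\to0$.

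I expect the main obstacle to be exactly this last direction: a.e. convergence of the rearrangements controls their behaviour only on sets of finite measure, so obtaining \emph{global} rather than merely local convergence in measure demands a uniform-in-$n$ estimate on the tail. This is precisely where the assumption $x^*(\infty)=0$ is indispensable — through Lemma \ref{lem:x**=0} it converts, via the relation $\prec$, into a single uniform tail bound valid for the whole sequence $(x_n^*)$. Without such a hypothesis the tails of the $x_n^*$ need not be uniformly small, and the passage from a.e. convergence to global convergence in measure, and hence the implication $(ii)\Rightarrow(i)$, would break down.
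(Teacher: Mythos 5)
Your proposal is correct and follows essentially the same route as the paper: for $(i)\Rightarrow(ii)$ you deduce $x_n^*\to0$ a.e.\ from global convergence in measure (the paper cites property 2.11 of \cite{KPS} where you argue directly via the distribution function), and for $(ii)\Rightarrow(i)$ you use Lemma \ref{lem:x**=0} and the chain $x_n^*\le x_n^{**}\le x^{**}$ to get a uniform tail bound and hence global convergence in measure, exactly as in the paper. The only cosmetic difference is that the paper applies $(ii)$ to the sequence $(x_n^*)$ via symmetry of $E$, whereas you pass back to $(x_n)$ through equimeasurability; both are valid.
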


\begin{proof}
$(i)\Rightarrow(ii)$. Let assumption be satisfied. Then, by property 2.11 in \cite{KPS} it follows that $x_n^*\rightarrow{0}$ a.e. Consequently, since $x_n\prec{x}$ for any $n\in\mathbb{N}$, by symmetry of $E$ and by condition $(i)$ we get $\norm{x_n}{E}{}\rightarrow{0}$.\\
$(ii)\Rightarrow(i)$. Let $x_n\prec{x}$ for all $n\in\mathbb{N}$ and $x_n^*\rightarrow{0}$ a.e. and let $x^*(\infty)=0$. Then, by Lemma 3.1 in \cite{Cies-JAT} we get $x^{**}(\infty)=0$. Hence, since 
\begin{equation*}
x_n^*(t)\leq{x_n^{**}(t)}\leq{x^{**}(t)}
\end{equation*}
for any $t>0$ and $n\in\mathbb{N}$, we easily observe that $x_n^*\rightarrow{0}$ globally in measure. Therefore, since $x_n^*\prec{x}$ for any $n\in\mathbb{N}$, by symmetry of $E$ and by condition $(ii)$ we complete the proof.
\end{proof}

Immediately, by definition of $K$-order continuity and by the above lemma as well as by Lemma 2.5 in \cite{CieKolPan} in case when $E$ is order continuous we conclude the following corollary.

\begin{corollary}
	Let $E$ be a symmetric space and $x_n,x\in{E}$ for all $n\in\mathbb{N}$. If for any $x\in{E}$ we have $x^*(\infty)=0$ or $E$ is order continuous then the following condition are equivalent.
	\begin{itemize}
	\item[$(i)$] $E$ is $K$-order continuous.
    \item[$(ii)$] If $x_n\prec{x}$ and $x_n\rightarrow{0}$ globally in measure, then $\norm{x_n}{E}{}\rightarrow{0}.$		
	\end{itemize}
\end{corollary}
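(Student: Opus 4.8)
The plan is to read off the corollary from the preceding lemma by keeping careful track of the quantifier over $x$. By definition, an element $x\in E$ is a point of $K$-order continuity precisely when condition $(i)$ of the lemma holds for that $x$, so that $E$ being $K$-order continuous is exactly the statement that $(i)$ of the lemma holds for \emph{every} $x\in E$; likewise, corollary condition $(ii)$ is the assertion that condition $(ii)$ of the lemma holds for every $x\in E$. Thus the whole corollary reduces to applying the two implications of the lemma elementwise and accounting for the standing hypotheses.

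For $(i)\Rightarrow(ii)$ I would argue directly. Assume $E$ is $K$-order continuous, fix $x\in E$, and let $(x_n)\subset E$ satisfy $x_n\prec x$ and $x_n\to 0$ globally in measure. Since $x$ is a point of $K$-order continuity, condition $(i)$ of the lemma holds for this $x$, and the lemma's implication $(i)\Rightarrow(ii)$ then yields $\norm{x_n}{E}{}\to 0$. As $x$ was arbitrary, corollary condition $(ii)$ follows. Note that this direction uses neither $x^*(\infty)=0$ nor order continuity.

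For $(ii)\Rightarrow(i)$ I would split according to the two alternative hypotheses. If $x^*(\infty)=0$ for every $x\in E$, then for each fixed $x$ the lemma's implication $(ii)\Rightarrow(i)$ applies, so $(i)$ of the lemma holds for every $x$, i.e. $E$ is $K$-order continuous. In the order-continuous case the lemma's second implication cannot be quoted verbatim, since a priori some $x\in E$ might have $x^*(\infty)\neq 0$; this is the one genuine obstacle. To remove it I would invoke Lemma~2.5 of \cite{CieKolPan} in order to reduce the order-continuous case to the previous one, the underlying point being that order continuity forces $x^*(\infty)=0$ for every $x\in E$. Indeed, were $x^*(\infty)=c>0$ for some $x$, then $x^*\geq c\chi_{I}$, whence $c\chi_{I}\in E$ by the ideal property, while the truncations $c\chi_{[n,\infty)}\downarrow 0$ pointwise are equimeasurable with $c\chi_{I}$ and so have constant norm $c\,\phi(\infty)>0$ by rearrangement invariance, contradicting order continuity (on $I=[0,1]$ the condition $x^*(\infty)=0$ is automatic from the convention). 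Once $x^*(\infty)=0$ holds for all $x$, the argument of the first case finishes the proof.

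The only nontrivial point is this last reduction in the order-continuous case; everything else is a direct, quantifier-level application of the lemma. I would therefore spend the bulk of the write-up making the reduction precise via Lemma~2.5 of \cite{CieKolPan} and confirming that the resulting tail condition $x^*(\infty)=0$ is exactly what the lemma's $(ii)\Rightarrow(i)$ implication requires.
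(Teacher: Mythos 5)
Your proposal is correct and follows essentially the same route as the paper, which derives the corollary in one line from the preceding lemma together with Lemma~2.5 of \cite{CieKolPan} to handle the order-continuous case. The only difference is that you additionally spell out, via the truncation argument $c\chi_{[n,\infty)}\downarrow 0$ with constant norm, why order continuity forces $x^*(\infty)=0$ for every $x\in E$ --- which is precisely the fact the paper delegates to that cited lemma.
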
 

Now we answer the question about a correspondence between a point of $K$-order continuity in a symmetric space $E$ and an embedding $E\hookrightarrow{L^1[0,\infty)}$.

\begin{lemma}\label{lem:2:KOC}
Let $E$ be a symmetric space on $I=[0,\infty)$ and let $\phi$ be the fundamental function of $E$. If $x\in{E}\setminus\{0\}$ is a point of $K$-order continuity, then $\lim_{t\rightarrow\infty}{\phi(t)}/{t}=0.$ Additionally, if $x^*(\infty)=0$ we have $\lim_{t\rightarrow\infty}\phi(t)x^{**}(t)=0.$
\end{lemma}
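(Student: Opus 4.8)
The statement has two parts. First, if $x \in E \setminus \{0\}$ is a point of $K$-order continuity, then $\lim_{t\to\infty}\phi(t)/t = 0$. Second, under the extra hypothesis $x^*(\infty)=0$, we get $\lim_{t\to\infty}\phi(t)x^{**}(t)=0$. The plan is to prove each part by contradiction, constructing an explicit sequence $(x_n)$ with $x_n \prec x$ and $x_n^* \to 0$ a.e. but $\|x_n\|_E \not\to 0$, which violates $K$-order continuity of $x$.

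**Part 1.** Suppose the claim fails, so $\limsup_{t\to\infty}\phi(t)/t = c > 0$; since $\phi$ is quasiconcave, $\phi(t)/t$ is decreasing, so in fact $\lim_{t\to\infty}\phi(t)/t = c > 0$. Fix a value $a := \int_0^\infty x^*(s)\,ds$. The key observation is that since $x$ is a point of $K$-order continuity in a symmetric space, $x$ must lie in $L^1$: indeed, were $x^{**}(\infty) > 0$, one could spread mass out to infinity and contradict $K$-order continuity. I would first argue that $x^{**}(\infty)=0$ is forced (so $x \in L^1[0,\infty)$ with $\int_0^\infty x^* = a < \infty$). Then I would build $x_n = \frac{a}{n}\,\chi_{[0,n)}$. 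These are decreasing, $x_n^* = x_n \to 0$ a.e., and $\int_0^t x_n^*(s)\,ds \le a = \int_0^\infty x^*$ for all $t$, so $x_n^{**}(t) \le x^{**}(t)$ for every $t$, giving $x_n \prec x$. But using $E \overset{1}{\hookrightarrow} M_\phi$ is the wrong direction; instead I would use the fundamental-function lower bound. Since $x_n^{**}(n) = a/n$ and $\phi$ is the fundamental function,
\begin{equation*}
\|x_n\|_E \ge \|x_n\|_{M_\phi} = \sup_{t>0} x_n^{**}(t)\phi(t) \ge x_n^{**}(n)\,\phi(n) = \frac{a}{n}\phi(n) \to a\,c > 0,
\end{equation*}
where the first inequality is the embedding $E \overset{1}{\hookrightarrow} M_\phi$ recalled in the preliminaries. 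This contradicts $\|x_n\|_E \to 0$, proving $c = 0$.

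**Part 2.** Now assume $x^*(\infty)=0$; by Lemma~\ref{lem:x**=0} we have $x^{**}(\infty)=0$ as well. Suppose toward a contradiction that $\limsup_{t\to\infty}\phi(t)x^{**}(t) = \delta > 0$, so there is a sequence $t_n \uparrow \infty$ with $\phi(t_n)x^{**}(t_n) \ge \delta/2$. I would define the ``leveled-off'' tails $x_n := x^{**}(t_n)\,\chi_{[0,t_n)}$. Each $x_n$ is decreasing with $x_n^* = x_n$, and $x_n^*(t) = x^{**}(t_n) \to 0$ for each fixed $t$ since $x^{**}(\infty)=0$ and $t_n \to \infty$, so $x_n^* \to 0$ a.e. To verify $x_n \prec x$: for $t \le t_n$ we have $x_n^{**}(t) = x^{**}(t_n) \le x^{**}(t)$ by monotonicity of $x^{**}$, and for $t > t_n$ we have $x_n^{**}(t) = \frac{1}{t}\int_0^{t_n}x^{**}(t_n) = \frac{t_n}{t}x^{**}(t_n) = \frac{1}{t}\int_0^{t_n}x^*(s)\,ds \le x^{**}(t)$, using $\int_0^{t_n}x^{**}(t_n) \le \int_0^{t_n}x^* = t_n x^{**}(t_n)$ together with monotonicity. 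Hence $x_n \prec x$. Then by the embedding into $M_\phi$,
\begin{equation*}
\|x_n\|_E \ge x_n^{**}(t_n)\,\phi(t_n) = x^{**}(t_n)\,\phi(t_n) \ge \delta/2 > 0,
\end{equation*}
again contradicting $K$-order continuity of $x$. Therefore $\lim_{t\to\infty}\phi(t)x^{**}(t)=0$.

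**Main obstacle.** The delicate point is verifying $x_n \prec x$ cleanly in Part~2, i.e.\ checking $x_n^{**}(t) \le x^{**}(t)$ across the breakpoint $t = t_n$; the inequality $t_n x^{**}(t_n) = \int_0^{t_n} x^* \le t\,x^{**}(t)$ for $t > t_n$ must be argued from monotonicity and subadditivity of $x^{**}$ rather than assumed. A secondary subtlety is Part~1's reduction to $x \in L^1$: one must ensure that a point of $K$-order continuity in a symmetric space on $[0,\infty)$ cannot carry mass at infinity, which is itself essentially the motivating question the section answers, so I would justify it directly by testing against $x_n = \frac{1}{n}\chi_{[0,n^2)}$-type sequences if needed. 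I expect these two verifications to consume most of the technical work, while the contradiction via $E \overset{1}{\hookrightarrow} M_\phi$ is immediate once the sequence is in hand.
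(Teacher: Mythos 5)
Your Part 2 is essentially the paper's argument (the paper uses $x_n=x^{**}(n)\chi_{[0,n)}$; your leveled-off tails are the same construction and the majorization check is correct). The problem is in Part 1, where the key step fails. You set $a=\int_0^\infty x^*$ and claim that $x_n=\frac{a}{n}\chi_{[0,n)}$ satisfies $x_n\prec x$ because $\int_0^t x_n^*\le a=\int_0^\infty x^*$ for all $t$. That is a non-sequitur: the relation $\prec$ requires $\int_0^t x_n^*\le\int_0^t x^*$ for every $t$, and at $t=n$ you have $\int_0^n x_n^*=a=\int_0^\infty x^*\ge\int_0^n x^*$, with strict inequality whenever $x^*$ does not vanish on $(n,\infty)$. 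For instance, if $x^*(s)=e^{-s}$ then $a=1$ and $x_n^{**}(n)=1/n>(1-e^{-n})/n=x^{**}(n)$, so $x_n\not\prec x$. The preliminary reduction is also shaky: you assert that $K$-order continuity forces $x\in L^1$ via ``$x^{**}(\infty)>0$ would be contradicted,'' but $x^{**}(\infty)=0$ does not imply $\int_0^\infty x^*<\infty$ (take $x^*(s)=\min\{1,1/s\}$), so even that route would not deliver $a<\infty$.

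The repair is exactly what the paper does: instead of spreading the total mass of $x^*$, spread the mass of a truncated block that sits \emph{below} $x^*$. Pick $t_x$ with $x^*(t_x)>0$ and set $y_n=\frac{x^*(t_x)}{n}\chi_{[0,nt_x)}$; then $y_n\prec y_1=x^*(t_x)\chi_{[0,t_x)}\le x^*$, so $y_n\prec x$ genuinely, $y_n^*\to 0$ a.e., and $\Vert y_n\Vert_E=x^*(t_x)t_x\,\frac{\phi(nt_x)}{nt_x}\to 0$ forces $\phi(s)/s\to 0$ (using that $\phi(s)/s$ is decreasing to pass from the sequence $nt_x$ to the full limit). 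Note that with this construction no contradiction argument, no embedding into $M_\phi$, and no integrability of $x$ are needed: the norm of $y_n$ is computed exactly from the fundamental function.
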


\begin{proof}
Let $t_x>0$ be such that $x^*(t_x)>0$. Define $$y_n=\frac{x^*(t_x)}{n}\chi_{[0,nt_x)}$$ for any $n\in\mathbb{N}$. Clearly,  $y_n=y_n^*\prec{}y_1^*\leq{x^*}$ and $\int_{0}^{\infty}y_n^*=x^*(t_x)t_x<\infty$ for any $n\in\mathbb{N}$. Thus, since $y_n^*\rightarrow{0}$ a.e. and by assumption that $x$ is a point of $K$-order continuity we obtain 
\begin{equation*}
\norm{y_n}{E}{}={x^*(t_x)}\frac{\phi(nt_x)}{n}={x^*(t_x)t_x}\frac{\phi(nt_x)}{nt_x}\rightarrow{0}\quad\textnormal{as}\quad{n\rightarrow\infty}.
\end{equation*}
Hence, $\phi(s)/s\rightarrow{0}$ as $s\rightarrow\infty$.
Next, assume that $x^*(\infty)=0$. Denote for any $n\in\mathbb{N}$,
\begin{equation*}
x_n=x^{**}(n)\chi_{[0,n)}.
\end{equation*}	
Then, we have $x_n=x_n^*\prec{x}$ for all $n\in\mathbb{N}$. Since $x^*(\infty)=0$, by Lemma \ref{lem:x**=0} it follows that $x_n^*\rightarrow{0}$ a.e. on $I$. Finally, according to assumption that $x$ is a point of $K$-order continuity we get
$\norm{x_n}{E}{}=x^{**}(n)\phi(n)\rightarrow{0}$ as $n\rightarrow\infty$.
\end{proof}

\begin{theorem}
Let $E$ be a symmetric space on $I=[0,\infty)$. If a point $x\in{E}$ is a point of $K$-order continuity, then $E$ is not embedded in $L^1[0,\infty)$.
\end{theorem}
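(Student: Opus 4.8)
The plan is to argue by contradiction, converting the hypothesized embedding $E\hookrightarrow L^1[0,\infty)$ into a strictly positive lower bound for $\phi(t)/t$ that collides with the sublinear decay supplied by Lemma~\ref{lem:2:KOC}. Throughout, $\phi$ denotes the fundamental function of $E$. I would first record that the point $x$ at hand must be nonzero for the hypothesis to carry any content: if $x=0$, then $x_n\prec x$ forces $x_n^{**}\le 0$, hence $x_n=0$, so the defining condition of $K$-order continuity holds trivially and gives nothing (indeed $E=L^1$ shows the conclusion can fail in that degenerate case). So the statement is understood for $x\in E\setminus\{0\}$.

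The first substantive step is to apply Lemma~\ref{lem:2:KOC}. Since $x\in E\setminus\{0\}$ is a point of $K$-order continuity, the first conclusion of that lemma gives
\[
\lim_{t\to\infty}\frac{\phi(t)}{t}=0.
\]
I emphasize that this step requires neither the assumption $x^*(\infty)=0$ nor any further structural hypothesis on $E$; it is precisely the information that $K$-order continuity of a single nonzero point already forces the fundamental function to grow strictly sublinearly at infinity. (The second, conditional conclusion of the lemma is not needed here.)

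Next I would suppose toward a contradiction that $E\hookrightarrow L^1[0,\infty)$, so that there is a constant $C>0$ with $\norm{y}{L^1}{}\le C\,\norm{y}{E}{}$ for every $y\in E$. Testing this inequality on the characteristic functions $\chi_{(0,t)}$, which lie in $E$ with $\norm{\chi_{(0,t)}}{E}{}=\phi(t)$ by definition of the fundamental function, and using $\norm{\chi_{(0,t)}}{L^1}{}=t$, yields $t\le C\,\phi(t)$, whence $\phi(t)/t\ge 1/C>0$ for all $t>0$. Letting $t\to\infty$ contradicts the limit displayed above, so no such embedding can exist.

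The only genuine difficulty has in fact been discharged inside Lemma~\ref{lem:2:KOC}: the passage from $K$-order continuity to the decay of $\phi(t)/t$, obtained there by feeding the dilated test functions $y_n=\tfrac{x^*(t_x)}{n}\chi_{[0,nt_x)}$ into the definition and computing $\norm{y_n}{E}{}=x^*(t_x)t_x\,\phi(nt_x)/(nt_x)$. Granting that lemma, the present theorem is immediate, the residual work being only the routine identification of the fundamental functions of $L^1$ and of $E$ evaluated on characteristic functions. Thus I do not expect any real obstacle at the level of this theorem itself; the mathematical weight lies entirely in the cited lemma.
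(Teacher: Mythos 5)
Your proof is correct and takes essentially the same route as the paper: both reduce the theorem to Lemma~\ref{lem:2:KOC} and to the fact that $E\hookrightarrow L^1[0,\infty)$ forces $\phi(t)/t$ to be bounded below, the only difference being that you verify that implication directly on $\chi_{(0,t)}$ while the paper cites the known equivalence with $\phi(t)\approx t$. Your side remark that $x$ must be nonzero (since $x=0$ is vacuously a point of $K$-order continuity in $L^1$) is a fair observation and matches the hypothesis $x\in E\setminus\{0\}$ in Lemma~\ref{lem:2:KOC}.
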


\begin{proof}
Let $\phi$ be a fundamental function of a symmetric space $E$ on $I=[0,\infty)$. Then, we have $\lim_{t\rightarrow\infty}\frac{\phi(t)}{t}>0$ if and only if $\phi(t)\approx{t}$ for large enough $t>0$. Let us mention that the fundamental function $\phi$ of a symmetric space $E$ on $I=[0,\infty)$ satisfies $\phi(t)\approx{t}$ for large enough $t>0$ if and only if $E\overset{C}{\hookrightarrow}{L^1[0,\infty)}$. Hence, by assumption that $x$ is a point of $K$-order continuity, applying Lemma \ref{lem:2:KOC} we finish the proof.
\end{proof}

In the next results we investigate a connection between order continuity and $K$-order continuity in symmetric spaces.

\begin{lemma}\label{lem:KOC=>OC}
	Let $E$ be a symmetric space. If $x\in{E}$ is a point of $K$-order continuity and $x^*(\infty)=0$, then $x$ is a point of order continuity.
\end{lemma}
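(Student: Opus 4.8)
The plan is to verify directly the defining condition of a point of order continuity, reducing it to the hypothesis that $x$ is a point of $K$-order continuity. So I would start with an arbitrary sequence $(x_n)\subset E^+$ satisfying $x_n\le|x|$ a.e. and $x_n\to 0$ a.e., and aim to show $\|x_n\|_E\to 0$. Writing $g=|x|$ (so that $g^*=x^*$ and $g^*(\infty)=0$), the ideal property of $E$ gives $x_n\in E$, and the monotonicity of the decreasing rearrangement under pointwise domination yields $x_n^*\le g^*=x^*$ on $I$. Integrating this inequality gives $x_n^{**}\le x^{**}$, that is $x_n\prec x$, so the first requirement in the definition of $K$-order continuity is automatic.

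The crux is to upgrade the a.e. convergence $x_n\to 0$ to the a.e. convergence $x_n^*\to 0$ of the rearrangements, and this is exactly where the assumption $x^*(\infty)=0$ is indispensable: without it, mass may escape to infinity (take $x_n=\chi_{[n,n+1)}$) and the rearrangements need not tend to $0$. The hard part is therefore this implication, and the key observation is that $g^*(\infty)=x^*(\infty)=0$ forces every strict super-level set of $g$ to have finite measure. Concretely, I would fix $t_0>0$ and $\delta>0$ and set $A=\{s\in I: g(s)>\delta/2\}$; since $g^*(t)<\delta/2$ for all large $t$, we get $\mu(A)=d_g(\delta/2)<\infty$. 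On the finite-measure set $A$ the a.e. convergence $x_n\to 0$ implies convergence in measure, so $\mu\{s\in A: x_n(s)>\delta/2\}\to 0$; because $x_n\le g\le\delta/2$ off $A$, this is precisely $d_{x_n}(\delta/2)\to 0$. Hence for $n$ large $d_{x_n}(\delta/2)\le t_0$, which by the definition of the decreasing rearrangement gives $x_n^*(t_0)\le\delta/2<\delta$. As $t_0$ and $\delta$ were arbitrary, $x_n^*\to 0$ everywhere on $(0,\infty)$, in particular a.e.

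Finally, having established $x_n\prec x$ together with $x_n^*\to 0$ a.e., I would invoke the hypothesis that $x$ is a point of $K$-order continuity to conclude $\|x_n\|_E\to 0$, which is exactly the assertion that $x$ is a point of order continuity. The only genuine work is the middle paragraph; the remaining pieces are bookkeeping about the relation $\prec$ and the distribution function. I would take care to record that the two standard facts used there, namely that $0\le f\le h$ implies $f^*\le h^*$, and that a.e. convergence on a set of finite measure implies convergence in measure, require no extra hypotheses on $E$, so the argument goes through for an arbitrary symmetric space.
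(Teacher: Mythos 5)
Your proof is correct and follows the same route as the paper: establish $x_n\prec x$ from $x_n\le|x|$, upgrade the a.e.\ convergence $x_n\to 0$ to $x_n^*\to 0$ a.e.\ using $x^*(\infty)=0$, and then invoke the definition of a point of $K$-order continuity. The only difference is that the paper disposes of the middle step by citing Property 2.12 of \cite{KPS}, whereas you prove it directly through the distribution function; that self-contained argument is sound.
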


\begin{proof}
	Let $(x_n)\subset{E}^+$ with $x_n\rightarrow{0}$ a.e. and $x_n\leq\abs{x}{}{}$. Immediately, by the property of the maximal function we obtain $x_n\prec{x}$. Since $x^*(\infty)=0$ and $x_n\leq\abs{x}{}{}$ for all $n\in\mathbb{N}$, by property $2.12$ in \cite{KPS} it follows that $x_n^*\rightarrow{0}$ a.e. Hence, by assumption that $x$ is a point of $K$-order continuity, we have $\norm{x_n}{E}{}\rightarrow{0}$.
\end{proof}

We show that the reverse conclusion of Lemma \ref{lem:KOC=>OC} in general is not true. Namely, we prove that any element in $L^1[0,\infty)$ is not point of $K$-order continuity. Moreover, the following remark yields that Proposition 4.2 in \cite{DSS} is not true in case when $I=[0,\infty)$. 

\begin{remark} 
	It is well known that each element $x$ of $L^1[0,\infty)$ is a point of order continuity (see \cite{LinTza}). Consider $x=\chi_{[0,1)}$ and $x_n=\frac{1}{n}\chi_{[0,n)}$ for any $n\in\mathbb{N}$. It is easy to see that $x_n\rightarrow{0}$ and $x_n\prec{x}$ for every $n\in\mathbb{N}$. On the other hand, we can easily show $\norm{x_n}{L^1}{}=1$ for any $n\in\mathbb{N}$, which concludes that $x$ is no point of $K$-order continuity. Hence, it is easy to observe that any $y\in{L^1}\setminus\{0\}$ is no point of $K$-order continuity. Indeed, finding $\beta, \gamma\in\mathbb{R}^+$ such that $\beta\chi_{(0,\gamma)}\prec{y^*}$ and proceeding analogously as above with $x$ we finish our investigation.
\end{remark}

In the next example, we show that the assumption $x^*(\infty)=0$ in Lemma \ref{lem:KOC=>OC} is necessary and cannot be omitted.

\begin{example}
We consider the Marcinkiewicz function space $M_{\phi}^{(*)}$ with $\phi(t)=1-\frac{1}{1+t}$ on $I=[0,\infty)$. Define $x=\chi_{[0,\infty)}$. We claim that $x$ is a point of $K$-order continuity and it is no point of order continuity in $M_{\phi}^{(*)}$. It is easy to see that $x^*(\infty)=1$ and $\norm{x}{M_{\phi}^{(*)}}{}=1$. By Lemma 2.5 \cite{CieKolPan} it follows that $x$ is no point of order continuity in $M_{\phi}^{(*)}$. Let $(x_n)\subset{M_{\phi}^{(*)}}$ be such that $x_n\prec{x}$ and $x_n^*\rightarrow{0}$ a.e. In view of the fact that $\norm{\cdot}{M_{\phi}^{(*)}}{}$ satisfies the triangle inequality with a constant $C>0$, we observe for all $n\in\mathbb{N}$,
\begin{align}\label{equ:1:rem:2}
\norm{x_n}{M_{\phi}^{(*)}}{}&\leq{}C\norm{x_n^*\chi_{[0,1]}}{M_{\phi}^{(*)}}{}+C\norm{x_n^*\chi_{(1,\infty)}}{M_{\phi}^{(*)}}{}\\
&\leq{C}\norm{x_n^*\chi_{[0,1]}}{M_{\phi}^{(*)}}{}+{C}x_n^*(1)\norm{x}{M_{\phi}^{(*)}}{}\nonumber.
\end{align}
By assumption $x_n\prec{x}$ and by definition of $x$ and right-continuity of $x^*$ we get $x_n^*(t)\leq{1}$ for all $n\in\mathbb{N}$ and $t\in[0,\infty)$. Thus, by Bolzano-Weierstrass theorem (see \cite{Royd}), passing to subsequence and relabelling if necessary we may assume that for each $n\in\mathbb{N}$ there exists $t_n\in[0,1]$ such that $\lim_{n\rightarrow\infty}t_n=t_0$ and
\begin{equation}\label{equ:2:rem:2}
\norm{x_n^*\chi_{[0,1]}}{M_{\phi}^{(*)}}{}=\sup_{0<t<1}\left\{x_n^*(t)\phi(t)\right\}\leq{}x_n^*(t_n)\left(1-\frac{1}{1+t_n}\right)+\frac{1}{n}
\end{equation}
for all $n\in\mathbb{N}$. Since $x_n^*$ is decreasing for any $n\in\mathbb{N}$, if $t_0>0$, then it is easy to notice that for large enough $n\in\mathbb{N}$,
\begin{equation*}
x_n^*(t_n)\left(1-\frac{1}{1+t_n}\right)\leq{}x_n^*\left(\frac{t_0}{2}\right)\left(1-\frac{1}{1+t_n}\right)\leq{}x_n^*\left(\frac{t_0}{2}\right),
\end{equation*}
whence, by assumption $x_n^*\rightarrow{0}$ a.e. and by conditions \eqref{equ:1:rem:2} and \eqref{equ:2:rem:2}, this yields that $\norm{x_n}{M_{\phi}^{(*)}}{}\rightarrow{0}$. Now, suppose $t_0=0$. Then, $t_n\rightarrow{0}$ and since $x_n^*\leq{1}$ for all $n\in\mathbb{N}$ we have
\begin{equation*}
x_n^*(t_n)\left(1-\frac{1}{1+t_n}\right)\leq{}\left(1-\frac{1}{1+t_n}\right)\rightarrow{0}.
\end{equation*}
Hence, proceeding analogously as in previous case we complete the proof.
\end{example}

In the next results we investigate a complete characterization of $K$-order continuity in symmetric spaces.

\begin{theorem}
	Let $E$ be a symmetric space on $I=[0,\alpha)$, where $\alpha=1$ or $\alpha=\infty$, with the fundamental function $\phi$ and let $x\in{E}$. Then, the following conditions are equivalent.
	\begin{itemize}
		\item[$(i)$]  $x$ is a point of order continuity and in case when $\alpha=\infty$, $$\phi(s)x^{**}(s)\rightarrow{0}\quad\textnormal{as}\quad{}s\rightarrow\infty.$$
		\item[$(ii)$] $x$ is a point of $K$-order continuity and in case when $\alpha =\infty$, $$x^*(\infty)=0.$$
	\end{itemize} 
\end{theorem}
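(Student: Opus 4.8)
The plan is to prove the two implications separately, using $K$-monotonicity of $E$ (which holds because $E$ has the Fatou property) as the main structural tool throughout. The direction $(ii)\Rightarrow(i)$ is a repackaging of the two lemmas already at hand. Indeed, $x^*(\infty)=0$ holds automatically when $\alpha=1$ (by the convention fixed in the Preliminaries) and is assumed when $\alpha=\infty$; hence Lemma \ref{lem:KOC=>OC} shows that a point of $K$-order continuity is a point of order continuity. When $\alpha=\infty$, the second assertion of Lemma \ref{lem:2:KOC} applied to $x\neq 0$ (the case $x=0$ being trivial) gives exactly $\phi(s)x^{**}(s)\to 0$. So nothing new is needed here.

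For $(i)\Rightarrow(ii)$ I would first dispose of the missing part of $(ii)$, namely $x^*(\infty)=0$ when $\alpha=\infty$. Since $\phi$ is increasing with $\phi(s)\ge\phi(1)>0$ for $s\ge 1$, if $x^*(\infty)=c>0$ then $x^{**}(s)\ge x^*(s)\ge c$, so $\phi(s)x^{**}(s)\ge c\,\phi(1)>0$ for all $s\ge 1$, contradicting $\phi(s)x^{**}(s)\to 0$; thus $x^*(\infty)=0$. It then remains to verify $K$-order continuity: let $(x_n)\subset E$ satisfy $x_n\prec x$ and $x_n^*\to 0$ a.e.; by symmetry of $E$ we may assume $x_n=x_n^*$ is decreasing, and we fix $0<\delta<S<\infty$ and split $x_n=x_n\chi_{[0,\delta)}+x_n\chi_{[\delta,S)}+x_n\chi_{[S,\infty)}$.

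For the \emph{head}, a direct computation of maximal functions gives $x_n\chi_{[0,\delta)}\prec x^*\chi_{[0,\delta)}$ (for $t\le\delta$ both sides have the common majorising average $x^{**}(t)$, and for $t>\delta$ the constraint $\int_0^\delta x_n\le\int_0^\delta x^*$ does the rest); since $E$ is $K$-monotone, $\norm{x_n\chi_{[0,\delta)}}{E}{}\le\norm{x^*\chi_{[0,\delta)}}{E}{}$, and the right-hand side tends to $0$ as $\delta\to 0$ because $x$, hence $x^*$, is a point of order continuity. This bound is uniform in $n$. For the \emph{middle}, the decreasing rearrangement of $x_n\chi_{[\delta,S)}$ is dominated by $x_n(\delta)\chi_{[0,S)}$, so $\norm{x_n\chi_{[\delta,S)}}{E}{}\le x_n(\delta)\phi(S)\to 0$ as $n\to\infty$ (choosing $\delta$ in the full-measure set where $x_n(\delta)\to 0$). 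Combining the two and letting $\delta\to 0$ reduces everything to $\limsup_n\norm{x_n}{E}{}\le\limsup_n\norm{x_n\chi_{[S,\infty)}}{E}{}$ for every $S>0$.

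The \emph{tail} is where I expect the real work, and the role of the hypothesis $\phi(s)x^{**}(s)\to 0$ to become decisive. Here I would again use $K$-monotonicity, now in the form of a single-block comparison: the tail $b_n:=x_n\chi_{[S,\infty)}$ has height at most $x_n(S)$ and mass $m_n:=\int_S^\infty x_n$, and among all decreasing functions with given height and mass the Hardy--Littlewood--P\'olya-largest is the block $B_n:=x_n(S)\chi_{[0,L_n)}$ with $L_n=m_n/x_n(S)$, since $\int_0^t b_n^*\le\min\{x_n(S)\,t,\,m_n\}=\int_0^t B_n$. Thus $b_n\prec B_n$ and $\norm{b_n}{E}{}\le\norm{B_n}{E}{}=x_n(S)\phi(L_n)=m_n\,\phi(L_n)/L_n$, and the task becomes to show this block norm is small for large $S$; this is precisely the quantity that $\phi(s)x^{**}(s)\to 0$ controls, using that $m_n$ and $L_n$ are tied to the decay of $x^{**}$ through $x_n\prec x$ and that $\phi(L)/L$ is decreasing. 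The chief obstacle will be the case $x\notin L^1$, where $m_n$ can be infinite and the crude block must be replaced by a truncated comparison (or the far tail handled by combining the pointwise bound $b_n^*\le x^{**}(S+\,\cdot\,)$ with the convergence $x_n^{**}(t)=\tfrac1t\int_0^t x_n^*\to 0$, which follows from $x_n^*\prec x$, $x_n^*\to 0$ a.e. and the local integrability forced by $\phi x^{**}\to 0$). Once the far tail is shown to vanish as $S\to\infty$, the three estimates combine to give $\norm{x_n}{E}{}\to 0$, so that $x$ is a point of $K$-order continuity.
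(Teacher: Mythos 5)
Your $(ii)\Rightarrow(i)$ direction, your derivation of $x^*(\infty)=0$ from $\phi(s)x^{**}(s)\to 0$, and your head and middle estimates are all correct: the relation $x_n^*\chi_{[0,\delta)}\prec x^*\chi_{[0,\delta)}$ does hold, $K$-monotonicity follows from the Fatou property, and order continuity of $x^*$ kills the head uniformly in $n$. (This three-piece decomposition is in fact simpler than the paper's treatment of the near part, which goes through the sets $M_n=\{t\in[0,t_0]:x^*(t)\le x_n^*(t)\}$, Hardy's lemma and a rearrangement inequality of Chilin--Sukochev.) The genuine gap is exactly where you locate it yourself: the tail, which is the only place the hypothesis $\phi(s)x^{**}(s)\to 0$ can enter, is left as a heuristic. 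Neither of your two proposed mechanisms closes it. The single-block comparison $b_n\prec B_n=x_n(S)\chi_{[0,L_n)}$ with $L_n=m_n/x_n(S)$ breaks down when $m_n=\int_S^\infty x_n=\infty$ (which happens whenever $x_n\notin L^1$), and even for finite $m_n$ the bound $\norm{B_n}{E}{}=m_n\,\phi(L_n)/L_n$ is not controlled by $\phi(s)x^{**}(s)\to 0$ without first proving both that $L_n\to\infty$ and that $m_n\lesssim L_n x^{**}(L_n)$; neither is automatic, since $x_n\prec x$ only bounds $\int_0^t x_n$ from above and gives no lower bound forcing the mass of $x_n$ to sit near the origin. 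The fallback via $b_n^*\le x^{**}(S+\cdot)$ is also unusable: $x^{**}(S+\cdot)$ is merely a bounded decreasing function tending to $0$ and need not belong to $E$ at all, so the pointwise domination yields no norm estimate.

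The repair, and the route the paper takes, is to majorize the tail not by a block alone but by the hybrid function
\begin{equation*}
y_L=x^{**}(L)\chi_{[0,L)}+x^*\chi_{[L,\infty)},
\end{equation*}
which is decreasing, satisfies $y_L\prec x$, and has
\begin{equation*}
\norm{y_L}{E}{}\le\phi(L)x^{**}(L)+\norm{x^*\chi_{[L,\infty)}}{E}{}\rightarrow 0
\end{equation*}
as $L\to\infty$, the first summand by the hypothesis $\phi(s)x^{**}(s)\to 0$ and the second by order continuity of $x^*$. One then passes to a subsequence $(n_k)$ with $x_{n_k}^*(S)\le x^{**}(k)$ (possible since $x_n^*(S)\to 0$ and $x^{**}(k)>0$), checks $x_{n_k}^*\chi_{[S,\infty)}\prec y_k$ by splitting at $t=k$ exactly as you split at $L_n$, and concludes by $K$-monotonicity together with a subsequence principle. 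Your block $B_n$ is precisely the first summand of $y_L$; the second summand, the genuine tail of $x^*$, is what rescues the argument in the non-integrable case you flag as the ``chief obstacle,'' and without it the proof of $(i)\Rightarrow(ii)$ is incomplete.
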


\begin{proof}
$(ii)\Rightarrow(i)$. It follows immediately from Lemmas \ref{lem:2:KOC} and \ref{lem:KOC=>OC}.\\
$(i)\Rightarrow(ii)$. We prove only the theorem in case when $\alpha=\infty$, because the proof in the case when $\alpha=1$ is similar.
Let $(x_n)\subset{E}$ and $x_n^*\rightarrow{0}$ a.e., $x_n\prec{x}$. Clearly, we may assume $x\neq{0}$. First, we claim that $x_n^*$ converges to zero globally in measure. Indeed, since for any $t>0$ and $n\in\mathbb{N}$,
\begin{equation}\label{equ:0:LLUKM=>KOC}
x_n^*(t)\leq{x_n^{**}(t)}\leq{x^{**}(t)},
\end{equation}
by Lemma 2.5 in \cite{CieKolPan} and Lemma \ref{lem:x**=0} for any $\epsilon>0$ there is $t_\epsilon>0$ such that for all $t\geq{t_\epsilon}$ and $n\in\mathbb{N}$ we have
\begin{equation}\label{equ:1:LLUKM=>KOC}
\max\{x_n^*(t),x^*(t)\}\leq\frac{\epsilon}{2}.
\end{equation}
Hence, since $x_n^*\rightarrow{0}$ locally in measure we get
\begin{align*}
\mu\left(t:x_n^*(t)\geq\epsilon\right)=\mu\left(t\in[0,t_\epsilon]:x_n^*(t)\geq\epsilon\right)\rightarrow{0},
\end{align*}
which proves our claim. Since $x^*\neq{0}$, there exists $t_0\in(0,t_\epsilon)$ such that $x^*(t_0)>0$. Define $\delta_0=x^*(t_0)/2$ and for any $n\in\mathbb{N}$,
\begin{equation*}
M_n=\left\{t\in[0,t_0]:x^*(t)\leq{x_n^*(t)}\right\}.
\end{equation*}
Then, by monotonicity of $x^*$ it is easy to see that for any $t\in[0,t_0]$,
\begin{equation*}
x^*(t)\geq{}x^*(t_0)>\delta_0.
\end{equation*}
Therefore, for any $n\in\mathbb{N}$ we obtain
\begin{equation*}
M_n\subset\left\{t:x_n^*(t)>\delta_0\right\},
\end{equation*}
whence, by the claim $x_n^*\rightarrow{0}$ globally in measure we get $\mu(M_n)\rightarrow{0}$. Moreover, by assumption that $x_n^*\rightarrow{0}$ a.e., we may suppose without loss of generality that $x_n^*(t_0)\rightarrow{0}.$ Clearly, for any $n\in\mathbb{N}$ we observe
\begin{equation*}
(x_n^*-x^*)^+\chi_{[t_0,t_\epsilon]}\leq{x_n^*}\chi_{[t_0,t_\epsilon]}\leq{x_n^*(t_0)}\chi_{[t_0,t_\epsilon]}.
\end{equation*}
Hence, by symmetry of $E$ it is easy to notice that
\begin{equation*}
\norm{(x_n^*-x^*)^+\chi_{[t_0,t_\epsilon]}}{}{}\leq{x_n^*(t_0)}\norm{\chi_{[t_0,t_\epsilon]}}{}{}
\end{equation*}
for each $n\in\mathbb{N}$. Consequently, since ${x_n^*(t_0)}$ converges to zero, this implies
\begin{equation}\label{equ::part:1:conv}
\norm{(x_n^*-x^*)^+\chi_{[t_0,t_\epsilon]}}{}{}\rightarrow{0}.
\end{equation}
Now, according to condition \eqref{equ:0:LLUKM=>KOC} we have
\begin{equation*}
(x_n^*-x^*)^+\leq{x_n^*}\prec{x^*}\quad\textnormal{and}\quad(x^*-x_n^*)^+\leq{x^*}
\end{equation*}
for every $n\in\mathbb{N}$. In consequence, by Hardy's lemma \cite{BS} it follows that 
\begin{equation*}
\int_{0}^{t}((x_n^*-x^*)^+)^*\chi_{[0,\mu(M_n)]}\leq\int_{0}^{t}x^*\chi_{[0,\mu(M_n)]}
\end{equation*}
for all $n\in\mathbb{N}$. Thus, by Proposition 1.1 \cite{ChSu} for any $t>0$ and $n\in\mathbb{N}$ we conclude
\begin{align*}
\left((x_n^*-x^*)^+\chi_{M_n}\right)^{**}(t)&=\frac{1}{t}\int_{0}^{t}\left((x_n^*-x^*)^+\chi_{M_n}\right)^*\\
&\leq\frac{1}{t}\int_{0}^{t}((x_n^*-x^*)^+)^*\chi_{[0,\mu(M_n)]}\\
&\leq\left(x^*\chi_{[0,\mu(M_n)]}\right)^{**}(t)\leq{x}^{**}(t).
\end{align*}
Hence, since $E$ is a symmetric space and 
$$(x_n^*-x^*)^+\chi_{M_n}=(x_n^*-x^*)^+\chi_{[0,t_0]}$$ 
for a.a. on $I$ and for any $n\in\mathbb{N}$, we have
\begin{equation*}
\norm{(x_n^*-x^*)^+\chi_{[0,t_0]}}{E}{}=\norm{(x_n^*-x^*)^+\chi_{M_n}}{E}{}\leq\norm{x^*\chi_{[0,\mu(M_n)]}}{E}{}
\end{equation*}
for each $n\in\mathbb{N}$. In consequence, since $x^*\chi_{[0,\mu(M_n)]}\leq{x^*}$ and $\mu(M_n)\rightarrow{0}$, by assumption that $x$ is a point of order continuity and by Lemma 2.6 \cite{CieKolPan} it follows 
\begin{equation*}
\norm{(x_n^*-x^*)^+\chi_{[0,t_0]}}{E}{}\rightarrow{0},
\end{equation*}
whence, by condition \eqref{equ::part:1:conv} and by the triangle inequality of the norm in $E$ we obtain
\begin{equation}\label{equ:6:LLUKM=>KOC}
\norm{(x_n^*-x^*)^+\chi_{[0,t_\epsilon]}}{E}{}\rightarrow{0}.
\end{equation}
Furthermore, since $\min\{x_n^*,x^*\}\leq{x^*}$ and $\min\{x_n^*,x^*\}\rightarrow{0}$ a.e., using analogous argument as previously we conclude 
\begin{equation*}
\norm{\min\{x_n^*,x^*\}}{E}{}\rightarrow{0}.
\end{equation*}
Therefore, since $x_n^*=\min\{x_n^*,x^*\}+(x_n^*-x^*)^+$ for any $n\in\mathbb{N}$, by condition \eqref{equ:6:LLUKM=>KOC} we get
\begin{equation}\label{equ:8:LLUKM=>KOC}
\norm{x_n^*\chi_{[0,t_\epsilon]}}{E}{}\rightarrow{0}.
\end{equation}
Now, we need to only show 
\begin{equation}\label{equ:second:part:conv}
\norm{x^*_n\chi_{[t_\epsilon,\infty)}}{E}{}\rightarrow{0}.
\end{equation}
Define for any $n\in\mathbb{N}$,
\begin{equation*}
y_n=\left(\frac{1}{n}\int_{0}^{n}x^*\right)\chi_{[0,n)}+x^*\chi_{[n,\infty)}.
\end{equation*}
It is obvious that $y_n=y_n^*$ for all $n\in\mathbb{N}$. We claim that $y_n\prec{x}$ for any $n\in\mathbb{N}$. In fact, by monotonicity of the maximal function of $x^*$, for any $n\in\mathbb{N}$ and $t\leq{n}$ we have
\begin{equation*}
y_n^{**}(t)=\frac{1}{t}\int_{0}^{t}\left(\frac{1}{n}\int_{0}^{n}x^*\right)\chi_{[0,n)}=\frac{1}{n}\int_{0}^{n}x^*\leq{x}^{**}(t).
\end{equation*}
Next, taking $n\in\mathbb{N}$ and $t>n$ by definition of $y_n$ we get immediately $y_n^{**}(t)=x^{**}(t)$ which proves our claim. Since $x^*(\infty)=0$, by Lemma \ref{lem:x**=0} it follows that $y_n^*\rightarrow{0}$ a.e. on $I$. Moreover, for each $n\in\mathbb{N}$ we have 
\begin{equation*}
\norm{y_n}{E}{}\leq{\phi(n)}x^{**}(n)+\norm{x^*\chi_{[n,\infty)}}{E}{}.
\end{equation*}
In consequence, since $x$ is a point of order continuity, by Lemma 2.6 in \cite{CieKolPan} and by assumption that $\phi(n)x^{**}(n)\rightarrow{0}$ as $n\rightarrow\infty$ it follows that  
\begin{equation}\label{equ:spec:seq:conv}
\norm{y_n}{E}{}\rightarrow{0}.
\end{equation}
Since $x_n^*\rightarrow{0}$ a.e., we may assume without loss of generality that $x_n^*(t_\epsilon)\rightarrow{0}$. Moreover, we may find a subsequence $(n_k)\subset\mathbb{N}$ such that for any $k\in\mathbb{N}$,
\begin{equation*}
x_{n_k}^*(t_\epsilon)\leq\frac{1}{k}\int_{0}^{k}x^*.
\end{equation*}
Fix $k\in\mathbb{N}$. Then, for any $t\in[0,k)$ we get
\begin{equation*}
\left(x_{n_k}^*\chi_{[t_\epsilon,\infty)}\right)^*(t)\leq{}x_{n_k}^*(t_\epsilon)\leq{y_k}^*(t).
\end{equation*}
Furthermore, since $y_k^{**}(t)=x^{**}(t)$ for all $t\geq{}k$ and by assumption $x_n\prec{x}$ for any $n\in\mathbb{N}$, considering $t\geq{}k$ we observe
\begin{equation*}
\left(x_{n_k}^*\chi_{[t_\epsilon,\infty)}\right)^{**}(t)\leq{}x_{n_k}^{**}(t)\leq{x^{**}(t)}={y_k}^{**}(t).
\end{equation*}
Hence, we obtain $x_{n_k}^*\chi_{[t_\epsilon,\infty)}\prec{y_k}$ for each $k\in\mathbb{N}$. In consequence, by symmetry of $E$ this yields that 
\begin{equation*}
\norm{x_{n_k}^*\chi_{[t_\epsilon,\infty)}}{E}{}\leq\norm{y_k}{E}{}
\end{equation*}  
for any $k\in\mathbb{N}$. Thus, by condition \eqref{equ:spec:seq:conv} and by Lemma 2.3 in \cite{ChDSS} we prove condition \eqref{equ:second:part:conv}. Finally, according to conditions \eqref{equ:8:LLUKM=>KOC} and \eqref{equ:second:part:conv} we get the end of the proof.
\end{proof}

The above theorem implies immediately the following result.

\begin{corollary}\label{coro:KOC}
		Let $E$ be a symmetric space on $I=[0,\alpha)$, where $\alpha=1$ or $\alpha=\infty$, with the fundamental function $\phi$. Then, the following conditions are equivalent.
	\begin{itemize}
		\item[$(i)$]  $E$ is order continuous and in case when $\alpha=\infty$, for any $x\in{E}$, $$\phi(s)x^{**}(s)\rightarrow{0}\quad\textnormal{as}\quad{}s\rightarrow\infty.$$
		\item[$(ii)$] $E$ is $K$-order continuous and in case when $\alpha =\infty$, for any $x\in{E}$, $$x^*(\infty)=0.$$
	\end{itemize} 
\end{corollary}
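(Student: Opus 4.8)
The plan is to derive the corollary directly from the preceding theorem by quantifying over all elements $x\in{E}$. Recall that, by definition, $E$ is order continuous precisely when every $x\in{E}$ is a point of order continuity, and $E$ is $K$-order continuous precisely when every $x\in{E}$ is a point of $K$-order continuity. Thus conditions $(i)$ and $(ii)$ of the corollary are nothing but the ``for all $x\in{E}$'' versions of the corresponding pointwise conditions appearing in the preceding theorem.

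First I would fix an arbitrary $x\in{E}$ and apply the preceding theorem, which yields the equivalence of the two pointwise statements: that $x$ is a point of order continuity together with $\phi(s)x^{**}(s)\rightarrow{0}$ as $s\rightarrow\infty$ (when $\alpha=\infty$), and that $x$ is a point of $K$-order continuity together with $x^*(\infty)=0$ (when $\alpha=\infty$). Since this equivalence holds for each individual $x$, I would then take the conjunction over all $x\in{E}$. Using that a universal quantifier distributes over a conjunction, the global condition ``$E$ is order continuous and $\phi(s)x^{**}(s)\rightarrow{0}$ for every $x\in{E}$'' is exactly $\forall x$ of the pointwise condition $(i)$, and likewise condition $(ii)$ of the corollary is $\forall x$ of the pointwise condition $(ii)$. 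Hence $(i)\Leftrightarrow(ii)$ follows at once from the pointwise equivalence.

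For the case $\alpha=1$ I would simply note that the supplementary conditions disappear: by the convention $x^*(\infty)=0$ adopted for $\alpha=1$, the extra hypotheses in both $(i)$ and $(ii)$ of the preceding theorem are vacuous, so the pointwise statement reduces to ``$x$ is a point of order continuity $\Leftrightarrow$ $x$ is a point of $K$-order continuity''. Quantifying over $x\in{E}$ then gives $E\in(OC)\Leftrightarrow{E}\in(KOC)$, which is precisely the corollary in this case.

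There is no genuine obstacle here: the only point requiring care is the bookkeeping of the universal quantifier together with the split into the two cases $\alpha=1$ and $\alpha=\infty$, making sure that the supplementary asymptotic conditions are correctly matched under the quantifier and that no quantifier is erroneously interchanged across the equivalence. Since all the substantive analytic work is already contained in the preceding theorem, the corollary is indeed immediate.
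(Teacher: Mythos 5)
Your proposal is correct and coincides with the paper's argument: the corollary is obtained simply by applying the preceding theorem to each $x\in{E}$ and quantifying universally, the paper itself stating only that ``the above theorem implies immediately the following result.'' The observation about the $\alpha=1$ case being vacuous via the convention $x^*(\infty)=0$ is accurate and requires no further justification.
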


Now, we present an example of a nontrivial symmetric space that is $K$-order continuous.
 
\begin{example}
	 Assuming that $\phi$ is a quasiconcave function on $I$ with $\inf_{t>0}{\phi(t)}=0$ and $\sup_{t>0}{\frac{t}{\phi(t)}}=\infty$, by Theorem 1.3 in \cite{KamHJLee} it is well known that the subspace of all points of order continuity in the Marcinkiewicz space $M_\phi$ coincides with 
	\begin{equation*}
	(M_\phi)_a=\left\{y\in{M_\phi}:\lim\limits_{t\rightarrow{0^+,\infty}}\phi(t)y^{**}(t)=0\right\}\neq\{0\}.
	\end{equation*}
	Finally, according to Corollary \ref{coro:KOC} we observe that $(M_\phi)_a$ is $K$-order continuous.
\end{example}

In the next result we establish an essential correspondence between an $H_g$ point and a point of order continuity in symmetric spaces under an additional assumption.

\begin{proposition}\label{pro:Hg=>OC}
Let $E$ be a symmetric space and let $x\in{E}$, $x^*(\infty)=0$. If $x$ is an $H_g$ point, then $x$ is a point of order continuity in $E$.
\end{proposition}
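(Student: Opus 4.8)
The plan is to verify the defining condition of a point of order continuity directly, feeding a carefully chosen sequence into the $H_g$ hypothesis. So let $(x_n)\subset E^+$ satisfy $x_n\leq|x|$ and $x_n\to 0$ a.e.; the goal is $\norm{x_n}{E}{}\to 0$. The obstruction to applying the $H_g$ property straight away is that the natural candidate $|x|-x_n$ tends to $|x|$, whereas the hypothesis is anchored at $x$. To fix the phase, I would set $z_n=x-\sg(x)x_n$. Since $0\leq x_n\leq|x|$, a short case check on $\{x>0\}$, $\{x<0\}$ and $\{x=0\}$ gives $|z_n|=|x|-x_n$ and $|z_n-x|=x_n$ pointwise; in particular $\norm{z_n-x}{E}{}=\norm{x_n}{E}{}$, so it suffices to prove $\norm{z_n-x}{E}{}\to 0$ by showing that $z_n$ satisfies the two hypotheses of the $H_g$ definition.

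Next I would check that $z_n\to x$ globally in measure. From $x_n\to 0$ a.e. we get $z_n\to x$ a.e., hence $|z_n-x|=x_n\to 0$ a.e. To upgrade this to global convergence in measure, fix $\delta>0$ and note $\{x_n>\delta\}\subseteq\{|x|>\delta\}$. Here the assumption $x^*(\infty)=0$ enters in an essential way: it forces $d_x(\delta)=\mu\{|x|>\delta\}<\infty$ for every $\delta>0$, so the a.e. convergence $x_n\to 0$ takes place on a set of finite measure and therefore implies $\mu\{x_n>\delta\}\to 0$. Thus $z_n\to x$ globally in measure.

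Then I would establish $\norm{z_n}{E}{}\to\norm{x}{E}{}$. The upper estimate is immediate from $|z_n|=|x|-x_n\leq|x|$ together with the ideal property $(1)$, giving $\limsup_n\norm{z_n}{E}{}\leq\norm{x}{E}{}$. For the matching lower estimate I would use the lower semicontinuity of the norm that follows from the Fatou property: putting $w_k=\inf_{n\geq k}|z_n|$ one has $w_k\uparrow|x|$ a.e. and $\norm{w_k}{E}{}\leq\inf_{n\geq k}\norm{z_n}{E}{}$, so the Fatou property yields $\norm{x}{E}{}=\lim_k\norm{w_k}{E}{}\leq\liminf_n\norm{z_n}{E}{}$. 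Combining the two bounds gives $\norm{z_n}{E}{}\to\norm{x}{E}{}$.

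Finally, having verified both that $z_n\to x$ globally in measure and that $\norm{z_n}{E}{}\to\norm{x}{E}{}$, the assumption that $x$ is an $H_g$ point gives $\norm{z_n-x}{E}{}\to 0$, which is exactly $\norm{x_n}{E}{}\to 0$. I expect the main obstacle to be the second hypothesis of the $H_g$ definition, namely the norm convergence $\norm{z_n}{E}{}\to\norm{x}{E}{}$: the inequality $\leq$ is trivial, but the reverse is not available from convergence in measure alone and must be extracted from the (monotone) Fatou property through the auxiliary infima $w_k$. The sign twist $z_n=x-\sg(x)x_n$ is the other key device, ensuring the limit is $x$ itself rather than $|x|$ so that the $H_g$ property at $x$ can be invoked.
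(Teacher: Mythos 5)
Your proof is correct, and it takes a genuinely different route from the paper's. The paper passes immediately to decreasing rearrangements: from $x_n\leq|x|$ and $x_n\to 0$ a.e.\ it deduces $x_n^*\to 0$ in measure (via property 2.12 in \cite{KPS}), considers the sequence $x^*-x_n^*\to x^*$, invokes Theorem 3.3 of \cite{CieKolPlu} to transfer the $H_g$ property from $x$ to $x^*$, and then uses Theorem 3.8 of \cite{CieKolPlu} together with the Fatou property to obtain $\norm{x^*-x_n^*}{E}{}\to\norm{x^*}{E}{}$ before applying the $H_g$ property at $x^*$. You instead stay with the original (signed) function: the device $z_n=x-\sg(x)x_n$, which gives $|z_n|=|x|-x_n$ and $|z_n-x|=x_n$ pointwise, lets you apply the $H_g$ hypothesis at $x$ itself, so you never need the nontrivial fact that the $H_g$ property passes to $x^*$; and you extract the lower norm bound $\liminf_n\norm{z_n}{E}{}\geq\norm{x}{E}{}$ directly from the Fatou property via the monotone infima $w_k=\inf_{n\geq k}|z_n|\uparrow|x|$, rather than from the cited Kadec--Klee machinery. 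Your use of $x^*(\infty)=0$ (equivalently $d_x(\delta)<\infty$ for all $\delta>0$) to upgrade $x_n\to 0$ a.e.\ to global convergence in measure is also sound, and is the precise point where that hypothesis enters in both arguments. The net effect is that your proof is more self-contained and elementary, needing only the lattice/ideal property, the Fatou property, and basic measure theory, while the paper's proof is shorter on the page but leans on two external theorems about $H_g$ points; your argument also does not use symmetry of $E$ beyond what is implicit in the hypotheses, which slightly widens its scope.
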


\begin{proof}
Let $(x_n)\subset{E^+}$, $x_n\leq\abs{x}{}{}$ and $x_n\rightarrow{0}$ a.e. Since $x^*(\infty)=0$, by property 2.12 in \cite{KPS} it follows that $x_n^*(t)\rightarrow{0}$ for all $t>0$. Moreover, by Proposition 1.7 \cite{BS} we have $x_n^*\leq{x^*}$ a.e. and consequently it is easy to show that $x_n^*\rightarrow{0}$ in measure. Hence, we get
$0\leq{x^*-x_n^*\leq{x^*}}$ a.e. for any $n\in\mathbb{N}$ and ${x^*-x_n^*\rightarrow{x^*}}$ in measure. Therefore, it is obvious that $\norm{x^*-x_n^*}{E}{}\leq\norm{x^*}{E}{}$ for any $n\in\mathbb{N}$. Furthermore, by assumption that $x$ is an $H_g$ point, in view of Theorem 3.3 \cite{CieKolPlu} it follows that $x^*$ is an $H_g$ point. Then, by Theorem 3.8 \cite{CieKolPlu} and by the Fatou property we conclude
\begin{equation*}
\norm{x^*-x_n^*}{E}{}\rightarrow\norm{x^*}{E}{}.
\end{equation*}  
Thus, since $x^*$ is an $H_g$ point, by symmetry of $E$ we get $\norm{x_n}{E}{}\rightarrow{0}$.
\end{proof}

$\begin{array}{l}
\textnormal{\small Maciej CIESIELSKI}\\
\textnormal{\small Institute of Mathematics}\\
\textnormal{\small Pozna\'{n} University of Technology}\\ 
\textnormal{\small Piotrowo 3A, 60-965 Pozna\'{n}, Poland}\\ 
\textnormal{\small email: maciej.ciesielski@put.poznan.pl;}
\end{array}$

\end{document}